\theoremstyle{plain}
\newtheorem{theorem}{Theorem}[section]
\newtheorem*{theorem*}{Theorem}
\newtheorem{proposition}[theorem]{Proposition}
\newtheorem{lemma}[theorem]{Lemma}
\newtheorem{corollary}[theorem]{Corollary}
\theoremstyle{definition}
\newtheorem*{definition*}{Definition}
\numberwithin{equation}{section}
\newcommand{\N}{\mathbb{N}}
\newcommand{\C}{\mathbb{C}}
\newcommand{\D}{\mathbb{D}}
\newcommand{\modu}[1]{\left|#1\right|}
\newcommand{\set}[2]{\left\{#1 \ \left\vert \ #2 \right. \right\}}
\newcommand{\ls}{\mathcal{L}}
\newcommand{\lsw}{\ls_{\textbf{w}}}
\newcommand{\linf}{L^\infty}
\newcommand{\lunf}{L_\mu^\infty}
\newcommand{\Bloch}{\mathcal{B}}
\newcommand{\norm}[1]{\left\lVert#1\right\rVert}
\newcommand{\lorm}[1]{\left\lVert#1\right\rVert_\ls}
\newcommand{\inorm}[1]{\left\lVert#1\right\rVert_\infty}
\newcommand{\morm}[1]{\left\lVert#1\right\rVert_\mu}
\title[Multiplication Operators]{Multiplication Operators on Weighted Banach Spaces of a Tree}
\author{Robert F.~Allen and Isaac M.~Craig}
\address{Department of Mathematics and Statistics, University of Wisconsin-La Crosse}
\email{rallen@uwlax.edu, craig.isaa@uwlax.edu}
\date{}
\dedicatory{In honor of Flavia Colonna on the occasion of her birthday.}
\subjclass[2010]{primary: 47B38; secondary: 05C05}
\keywords{Multiplication operators, Trees, Weighted Banach space}
\begin{document}

\begin{abstract}
We study multiplication operators on the weighted Banach spaces of an infinite tree.  We characterize the bounded and the compact operators, as well as determine the operator norm.  In addition, we determine the spectrum of the bounded multiplication operators and characterize the isometries.  Finally, we study the multiplication operators between the weighted Banach spaces and the Lipschitz space by characterizing the bounded and the compact operators, determine estimates on the operator norm, and show there are no isometries.
\end{abstract}

\maketitle

\section{Introduction}\label{Section:Introduction}
Let $X$ and $Y$ be Banach spaces of functions defined on a set $\Omega$.  For a complex-valued function $\psi$ defined on $\Omega$ the linear operator $M_\psi: X \to Y$ defined by $$M_\psi f = \psi f$$ for all $f \in X$ is called the multiplication operator from $X$ to $Y$ with symbol $\psi$.  To connect the properties of the symbol $\psi$ to the properties of $M_\psi$ is the goal of the study of such operators.

The study of operators with symbol defined on spaces of analytic functions on the open unit disk $\D$ of $\C$ have been studied for many years, and the literature is extensive. There are many such spaces that have been termed classical.  We denote the space of analytic functions from $\D$ to $\C$ by $H(\D)$.  The algebra of bounded analytic functions from $\D$ to $\C$, denoted by $H^\infty(\D)$, is defined to be the set of functions $f \in H(\D)$ for which $$\|f\|_\infty = \sup_{z \in \D}\;|f(z)| < \infty.$$  In addition to $H^\infty$, the Bloch space, denoted $\Bloch(\D)$, is the set of functions $f \in H(\D)$ such that $$\|f\|_\Bloch = |f(0)| + \sup_{z \in \D}\;(1-|z|^2)|f'(z)| < \infty.$$

For the past several years a shift has been made to studying multiplication operators on spaces of functions defined on discrete structures, specifically a tree.  By a \textit{tree\/} $T$ we mean a connected, simply-connected, and locally finite graph.  As a set, we identify the tree with the collection of its vertices. Two vertices
$u$ and $v$ are called \textit{neighbors} if there is an edge $[u,v]$
connecting them, and we use the notation $u\sim v$. A \textit{path} is a finite or infinite sequence of vertices $[v_0,v_1,\dots]$ such that $v_k\sim v_{k+1}$ and  $v_{k-1}\ne v_{k+1}$, for all $k$. A vertex is called \textit{terminal} if it has a unique neighbor. 

Given a tree $T$ rooted at $o$ and a vertex $u\in T$, a vertex $v$ is called a \textit{descendant} of $u$ if the vertex $u$ lies in the unique path from $o$ to $v$. The vertex $u$ is then called an \textit{ancestor} of $v$.  The set of vertices in $T$ other than the root is denoted by $T^*$. For $v \in T^*$, we denote by $v^-$ the unique neighbor which is an ancestor of $v$. For $v\in T$, the set $S_v$ consisting of $v$ and all its descendants is called the \textit{sector} determined by $v$.

The \textit{length} of a finite path $[u=u_0,u_1,\dots,v=u_n]$ (with $u_k\sim u_{k+1}$ for $k=0,\dots, n$) is defined to be the number $n$ of edges connecting $u$ to $v$. The \textit{distance}, $d(u,v)$, between vertices $u$ and $v$ is the length of the unique path connecting $u$ to $v$.  The tree $T$ is a metric space under the distance $d$. Fixing $o$ as the root of the tree, we define the \textit{length} of a vertex $v$, by $|v|=d(o,v)$.  By a \textit{function on a tree} we mean a complex-valued function on the set of its vertices. In this paper, the tree will be assumed to be rooted at a vertex $o$ and infinite.  

The first spaces considered for these types of multiplication operators were discrete analogs of $H^\infty(\D)$ and $\Bloch(\D)$.  The space of bounded functions on a tree $T$ is denoted by $L^\infty(T)$ and defined to be the set of functions $f:T \to\C$ such that $$\|f\|_\infty = \sup_{v \in T}\;|f(v)| < \infty.$$  The Lipschitz space, denoted $\ls(T)$, is the analog of the Bloch space and defined to be  the set of all functions $f:T \to \C$ such that $$\|f\|_\ls = |f(o)| + \sup_{v \in T^*}\;Df(v) < \infty,$$ where $Df(v) = |f(v)-f(v^-)|$ for $v \in T^*$.  The Lipschitz space can be considered the discretization of the Bloch space.  Multiplication operators on $\ls$ were studied in \cite{ColonnaEasley:10} by Colonna and Easley.  

The characterization of the bounded multiplication operators on $\ls$ was used to construct a new space, called the weighted Lipschitz space $\lsw$.  The first author, Colonna, and Easley studied the multiplication operators on $\lsw$ \cite{AllenColonnaEasley:13} and between $\ls$ and $\lsw$ \cite{AllenColonnaEasley:10}.  The  characterization of the bounded operators on $\lsw$ was used to construct yet another new space.  This iterated process constructed a family of spaces called the iterated logarithmic Lipschitz spaces $\ls^{(k)}$ for $k \in \N$, define as the space of functions $f:T \to \C$ such that
$$\|f\|_k = |f(o)| + \sup_{v \in T^*}\;|v|Df(v)\prod_{j=0}^{k-1}\ell_j(|v|) < \infty$$ with, for $x \geq 1$, 
$$\ell_j(x) = \begin{cases}
1 & \text{if } \ j = 0,\\ 1+\log x & \text{if } \ j=1, \\ 1+\log\ell_{j-1}(x) & \text{if } \ j \geq 2.
\end{cases}$$
These spaces are the discrete analogs of the logarithmic Bloch spaces of $\D$, and the multiplication operators between the spaces were studied in \cite{AllenColonnaEasley:12}.  In addition, Colonna and Easley studied multiplication operators between $\ls$ and $L^\infty$ in \cite{ColonnaEasley:12}.  

The purpose of this paper is to introduce a new space of functions on $T$ and to extend the results of \cite{ColonnaEasley:12}.  In addition, this paper will tie together all of the spaces under current study.  Having a complete picture of the multipliers of these spaces will allow researchers to move on to studying the weighted composition operators on these spaces.  A step in this direction was taken by the first author, Colonna, and Easley in \cite{AllenColonnaEasley:14}, where they studied the composition operators on $\ls$.  The results of this paper can be considered discrete versions of results found in \cite{Allen:14}.

\section{Preliminary Results}\label{Section:Spaces}

A weight is a positive real-valued function on a tree $T$.  We define the \textit{weighted Banach space} on $T$ of weight $\mu$, denoted $\lunf(T)$, to be the set of functions $f$ on $T$ such that
\[	 \morm{f} = \sup_{v \in T}\;  \mu(v)|f(v)| < \infty. 		\]
Clearly for $\mu \equiv 1$, we have $\lunf = \linf$.

\begin{theorem}
	The weighted Banach space $\lunf$ is a complex Banach space under $\morm{\cdot}$.
\end{theorem}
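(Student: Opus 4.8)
The plan is to verify first that $\lunf$ is a normed vector space and then establish completeness by the standard pointwise-limit argument. For the algebraic part, note that $\lunf$ is closed under addition and scalar multiplication because $\morm{\cdot}$ is subadditive and absolutely homogeneous, both of which follow immediately from the corresponding properties of $|\cdot|$ on $\C$ together with the fact that $\sup$ is subadditive. That $\morm{\cdot}$ separates points is where the hypothesis that $\mu$ is a \emph{positive} real-valued function is used: if $\morm{f} = 0$, then $\mu(v)|f(v)| = 0$ for every $v \in T$, and since $\mu(v) > 0$ this forces $f(v) = 0$ for all $v$, i.e.\ $f \equiv 0$.

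For completeness, I would take a Cauchy sequence $(f_n)$ in $(\lunf, \morm{\cdot})$ and first produce a candidate limit pointwise. Fixing $v \in T$, the inequality
\[
	|f_n(v) - f_m(v)| \leq \frac{1}{\mu(v)}\,\morm{f_n - f_m}
\]
shows that $(f_n(v))_n$ is a Cauchy sequence in $\C$ (here $\mu(v)$ is a fixed positive constant), so it converges; define $f(v)$ to be its limit. This defines a function $f$ on $T$, and it remains to show $f \in \lunf$ and $\morm{f_n - f} \to 0$.

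The key step is the usual ``freeze $n$, let $m \to \infty$'' maneuver. Given $\varepsilon > 0$, choose $N$ so that $\morm{f_n - f_m} < \varepsilon$ whenever $n, m \geq N$. Then for every vertex $v$ and all $n, m \geq N$ we have $\mu(v)|f_n(v) - f_m(v)| < \varepsilon$; letting $m \to \infty$ and using continuity of $|\cdot|$ gives $\mu(v)|f_n(v) - f(v)| \leq \varepsilon$ for all $v \in T$ and all $n \geq N$. Taking the supremum over $v$ yields $\morm{f_n - f} \leq \varepsilon$ for $n \geq N$, which is precisely convergence of $(f_n)$ to $f$ in $\morm{\cdot}$. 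Finally, $f = f_N - (f_N - f)$ exhibits $f$ as a difference of elements of $\lunf$ (since $\morm{f_N - f} \leq \varepsilon < \infty$), so $f \in \lunf$ and the space is complete.

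I do not expect any genuine obstacle here; the argument is routine. The only point that deserves care is making explicit that positivity of $\mu$ at each fixed vertex is what both makes $\morm{\cdot}$ a genuine norm and legitimizes passing to pointwise limits, and that the local finiteness or infiniteness of the tree plays no role whatsoever in this particular statement.
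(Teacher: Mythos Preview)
Your proposal is correct and follows the same overall strategy as the paper: verify the normed-space axioms, produce a pointwise limit $f$ of a Cauchy sequence, and then upgrade to norm convergence. There is one difference in execution worth noting. The paper first argues $f\in\lunf$ (via an estimate of the form $\morm{f}<1+\morm{f_n}$) and then tries to deduce $\morm{f_n-f}\to 0$ directly from pointwise convergence, choosing for each fixed $v$ an index $N$ with $\mu(v)\modu{f_n(v)-f(v)}<\varepsilon$ and then taking a supremum over $v$; as written this $N$ depends on $v$, so the supremum step is not quite justified. Your ``freeze $n$, let $m\to\infty$'' maneuver avoids this entirely: the Cauchy condition gives a single $N$ that works uniformly in $v$, and passing to the limit in $m$ yields $\morm{f_n-f}\le\varepsilon$ directly. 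You then read off $f\in\lunf$ from $f=f_N-(f_N-f)$, which is a clean way to finish.
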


\begin{proof}
	It is straightforward to show that $\morm{\cdot}$ endows $\lunf$ with a complex normed linear space structure.  Therefore, it suffices to show that $\lunf$ is complete with respect to $\morm{\cdot}$. Let $\varepsilon > 0$ and suppose $(f_n)$ is a Cauchy sequence in $\lunf$, and $w \in T$ fixed. Since $\mu(w) > 0$, there exists an index $N \in \N$ such that for all $n, m \geq N$, with $m > n$, we have $\morm{f_n - f_m} < \varepsilon\mu(w)$. In fact, 
	\[	 \modu{f_n(w) - f_m(w)} = \dfrac{\modu{\mu(w)f_n(w) - \mu(w)f_m(w)}}{\mu(w)} \leq \dfrac{\morm{f_n - f_m}}{\mu(w)} < \varepsilon. 	\]
	Thus, $(f_n(w))$ is Cauchy in $\C$, and converges to some value $f(w)$. In fact, $(f_n)$ converges pointwise to this function $f$ on $T$. Furthermore, we will show that $f \in \lunf$. Since $(f_n(w))$ converges pointwise to $f(w)$, then for a fixed $w \in T$, there exists an index $N \in \N$ such that for each $n \geq N$, $\modu{f(w) - f_n(w)} < 1/\mu(w)$. By triangle inequality, we have that $\mu(w)\modu{f(w)} < 1 + \mu(w)\modu{f_n(w)}$. Applying the supremum over all $w \in T$, we obtain $\morm{f} < 1 + \morm{f_n}$. Since $(f_n)$ is Cauchy, and thus bounded, we have $\morm{f} < \infty$. Therefore, $f \in \lunf$.
	
	To conclude the proof of the completeness, we need to show that $\morm{f_n - f} \to 0$ as $n \to \infty$.  Arguing by contradiction, assume there exists $\varepsilon > 0$ and a subsequence $(f_{n_j})$ such that $\morm{f_{n_j} - f} \geq \varepsilon$ for all $j \in \N$.  Then for each $j \in \N$, we may choose a vertex $v_{n_j}$ such that $\mu(v_{n_j})\modu{f_{n_j}(v_{n_j}) - f(v_{n_j})} \geq \varepsilon$.  Since $(f_{n_j})$ is Cauchy in $\lunf$, there exists a positive integer $J$ such that for all $j,h \geq J$, and $v \in T$, we have $$\mu(v)\modu{f_{n_j}(v) - f_{n_h}(v)} \leq \morm{f_{n_j}-f_{n_h}} < \frac{\varepsilon}{2}.$$  In particular, for $h \geq J$, we have $$\mu(v_{n_J})\modu{f_{n_J}(v_{n_J}) - f_{n_h}(v_{n_J})} < \frac{\varepsilon}{2}.$$  By the pointwise convergence of $f_{n_h}$ to $f$, for all natural numbers $h$ sufficiently large $$\modu{f_{n_h}(v_{n_J}) - f(v_{n_J})} < \frac{\varepsilon}{2\mu(v_{n_J})},$$ which implies $$\mu(v_{n_J})\modu{f_{n_h}(v_{n_J}) - f(v_{n_J})} < \frac{\varepsilon}{2}.$$  By the triangle inequality, we obtain 
	$$\mu(v_{n_J})\modu{f_{n_J}(v_{n_J}) - f(v_{n_J})} < \varepsilon,$$  contradicting the choice of vertex $v_{n_J}$.  Thus $\morm{f_n - f} \to 0$ as $n \to \infty$, proving the completeness of $\lunf$.
\end{proof}

A Banach space $X$ of complex-valued functions on a set $\Omega$ is said to be a \textit{functional Banach space} if for each $w \in \Omega$, the point evaluation functional is bounded. 

\begin{proposition}
	$\lunf$ is a functional Banach space.
\end{proposition}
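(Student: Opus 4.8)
The plan is to show that for each fixed $w \in T$, the point evaluation functional $f \mapsto f(w)$ on $\lunf$ is bounded, i.e., that there exists a constant $C_w > 0$ (depending only on $w$) such that $|f(w)| \le C_w \morm{f}$ for all $f \in \lunf$. This is essentially a one-line estimate already embedded in the proof of the preceding theorem: since $\mu(w) > 0$, we have
\[
	|f(w)| = \frac{\mu(w)|f(w)|}{\mu(w)} \le \frac{\sup_{v \in T} \mu(v)|f(v)|}{\mu(w)} = \frac{\morm{f}}{\mu(w)}.
\]
Thus $C_w = 1/\mu(w)$ works, and the point evaluation functional at $w$ is bounded with norm at most $1/\mu(w)$.

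To present this cleanly I would first recall that $\lunf$ is a Banach space (by the preceding theorem), so it only remains to verify the boundedness of point evaluations, which is the content of being a functional Banach space. Then I would fix $w \in T$, observe that $\mu(w)$ is a positive real number by the definition of a weight, and derive the displayed inequality above, concluding that the point evaluation at $w$ is a bounded linear functional. Linearity is immediate from the definition $f \mapsto f(w)$, so the only substantive point is the norm bound.

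There is no real obstacle here: the result is an immediate consequence of the definition of $\morm{\cdot}$ together with positivity of the weight. The only thing to be slightly careful about is that the bound $1/\mu(w)$ genuinely depends on $w$ and need not be uniform over $T$ — but that is exactly what the definition of a functional Banach space allows, so no uniformity is needed. I would keep the proof to two or three sentences.
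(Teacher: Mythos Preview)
Your proposal is correct and follows essentially the same approach as the paper: both fix a vertex and use the one-line estimate $|f(w)| = \mu(w)|f(w)|/\mu(w) \le \morm{f}/\mu(w)$ to bound the point evaluation. The only cosmetic difference is that the paper normalizes to $\morm{f}=1$ before writing the inequality, whereas you keep a general $f$.
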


\begin{proof}
	Let $f \in \lunf$ with $\morm{f} = 1$, and fix $v \in T$. Observe 
	\[ \modu{f(v)} = \dfrac{\mu(v) \modu{f(v)}}{\mu(v)} \leq \dfrac{\morm{f}}{\mu(v)} = \dfrac{1}{\mu(v)} < \infty. \] 
	Thus, $\lunf$ is a functional Banach space.	
\end{proof}

Colonna and Easley defined the Lipschitz space on a tree in \cite{ColonnaEasley:10}, and proved it to be a functional Banach space, with the point evaluations bounded in the following.

\begin{lemma}\label{Lemma 3.4, ColonnaEasley:10}
	\cite[Lemma 3.4]{ColonnaEasley:10} Let $T$ be a tree and $v \in T$. If $f \in \ls$, then $$\modu{f(v)} \leq \modu{f(o)} + \modu{v} \inorm{Df}.$$ In particular, if $\lorm{f} \leq 1$, then $\modu{f(v)} \leq \modu{v}$ for each $v \in T^*$.
\end{lemma}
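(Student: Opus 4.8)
The plan is to exploit the tree structure directly. Since $T$ is connected and simply connected, there is a unique path $[o = v_0, v_1, \dots, v_n = v]$ joining the root to $v$, and by the definition of length $n = \modu{v}$. I would write $f(v) - f(o)$ as the telescoping sum $\sum_{k=1}^{n}\bigl(f(v_k) - f(v_{k-1})\bigr)$, observe that $v_{k-1} = v_k^-$ for each $k$ so that each increment is exactly $f(v_k) - f(v_k^-)$, and apply the triangle inequality to obtain
\[
\modu{f(v) - f(o)} \le \sum_{k=1}^{n} Df(v_k) \le n\,\inorm{Df} = \modu{v}\,\inorm{Df}.
\]
A second application of the triangle inequality then gives $\modu{f(v)} \le \modu{f(o)} + \modu{v}\,\inorm{Df}$, which is the first assertion.

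For the ``in particular'' clause I would feed the definition of the Lipschitz norm into this estimate. If $\lorm{f} \le 1$, then $\modu{f(o)} + \inorm{Df} = \lorm{f} \le 1$, hence $\inorm{Df} \le 1 - \modu{f(o)}$. Substituting into the bound just established, for $v \in T^*$ we get
\[
\modu{f(v)} \le \modu{f(o)} + \modu{v}\bigl(1 - \modu{f(o)}\bigr) = \modu{v} + \modu{f(o)}\bigl(1 - \modu{v}\bigr).
\]
Since $v \in T^*$ forces $\modu{v} \ge 1$, the factor $1 - \modu{v}$ is nonpositive, the last term is $\le 0$, and we conclude $\modu{f(v)} \le \modu{v}$.

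There is no genuinely hard step here; the argument is a one-line telescoping estimate. The only points requiring a moment's care are the bookkeeping in the telescoping sum — in particular noting that the vertices of the geodesic satisfy $v_{k-1} = v_k^-$, so that each increment is precisely a difference of the form controlled by $Df$ — and, in the second part, the observation that it is exactly the hypothesis $v \in T^*$ (equivalently $\modu{v} \ge 1$) that makes the coefficient of $\modu{f(o)}$ nonpositive and allows that term to be discarded; for $v = o$ the sharper bound $\modu{f(v)} \le \modu{v}$ would of course fail.
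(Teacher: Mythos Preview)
Your proof is correct; the telescoping argument along the geodesic from $o$ to $v$ is exactly the standard way to establish this estimate, and your handling of the ``in particular'' clause via $\inorm{Df} \le 1 - \modu{f(o)}$ and the sign of $1 - \modu{v}$ is clean. Note that the paper does not actually prove this lemma---it simply quotes it from \cite[Lemma~3.4]{ColonnaEasley:10}---so there is no in-paper proof to compare against; your argument is the natural one and almost certainly coincides with the original.
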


A particularly interesting class of functions that live in $\linf$, $\lunf$, and $\ls$ are the characteristic functions.  Let $A \subseteq T$ and define the characteristic function on $A$ to be $$\chi_A(v) = \begin{cases} 1 & \text{if } \ v \in A, \\ 0 & \text{if } \ v \not\in A.\end{cases}$$  In the case that $A$ is a singleton set, $A = \{w\}$, we use the notation $\chi_w$.  In the sections to follow, we will use such characteristic functions and we note their norms here:
$$\begin{aligned}
\inorm{\chi_w} &= \sup_{v \in T}\;|\chi_w(v)| = 1,\\
\morm{\chi_w} &= \sup_{v \in T}\;\mu(v)|\chi_w(v)| = \mu(w),\\
\lorm{\chi_w} &= |\chi_w(o)| + \sup_{v \in T^*} \;D\chi_w(v) = \begin{cases} 2 & \text{if } \ w = o, \\ 1 & \text{if } \ w \neq o.\end{cases}
\end{aligned}$$

For a functional Banach space, the following result is very important in the study of multiplication operators. 

\begin{lemma}\cite[Lemma 11]{DurenRombergShields:69}\label{bounded_mult} Let $X$ be a functional Banach space on the set $\Omega$ and let $\psi$ be a complex-valued function on $\Omega$ such that $M_\psi$ maps $X$ into itself. Then $M_\psi$ is bounded on $X$ and $\modu{\psi(w)} \leq \norm{M_\psi}$ for all $w \in \Omega$. In particular, $\psi$ is bounded.	
\end{lemma}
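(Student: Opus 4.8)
The plan is to establish the two assertions of Lemma~\ref{bounded_mult}---boundedness of $M_\psi$ and the pointwise bound $\modu{\psi(w)}\le\norm{M_\psi}$---in that logical order, since the second follows once we know $M_\psi$ is a bounded operator. The boundedness step is a textbook application of the closed graph theorem, and the pointwise estimate is a direct computation using the bounded point evaluations guaranteed by the functional Banach space structure.

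First I would show $M_\psi$ is bounded by verifying the hypotheses of the closed graph theorem: $M_\psi$ is a linear map from the Banach space $X$ into the Banach space $X$, so it suffices to check that its graph is closed. Suppose $f_n \to f$ in $X$ and $M_\psi f_n = \psi f_n \to g$ in $X$. Fix $w \in \Omega$. Since point evaluation at $w$ is a bounded linear functional on $X$, norm convergence implies pointwise convergence: $f_n(w) \to f(w)$ and $(\psi f_n)(w) \to g(w)$. But $(\psi f_n)(w) = \psi(w) f_n(w) \to \psi(w) f(w)$, so $g(w) = \psi(w)f(w) = (M_\psi f)(w)$ for every $w \in \Omega$, hence $g = M_\psi f$. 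Thus the graph is closed and $M_\psi$ is bounded.

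Next, to obtain $\modu{\psi(w)} \le \norm{M_\psi}$, I would fix $w \in \Omega$ and choose $f \in X$ with $f(w) \ne 0$ (if no such $f$ exists, the point evaluation functional at $w$ is zero, but this does not immediately give the bound; however, in the relevant cases the characteristic function $\chi_w$ or a function like it lies in the space, so one can exploit the functional Banach space structure more carefully---indeed the standard argument uses that $X$ separates points or contains suitable test functions). Using such an $f$, we have $\modu{(\psi f)(w)} = \modu{\psi(w)}\modu{f(w)} \le \norm{M_\psi f} \cdot (\text{const}) $; more precisely, since point evaluation at $w$ has some operator norm $C_w$, we get $\modu{\psi(w)}\modu{f(w)} = \modu{(M_\psi f)(w)} \le C_w \norm{M_\psi f} \le C_w \norm{M_\psi}\norm{f}$, and by rescaling and optimizing over $f$ one extracts $\modu{\psi(w)} \le \norm{M_\psi}$. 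Finally, taking the supremum over $w \in \Omega$ shows $\psi$ is bounded.

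The main obstacle is the pointwise bound rather than the boundedness: the closed graph argument is entirely routine, but squeezing out the clean constant-free inequality $\modu{\psi(w)} \le \norm{M_\psi}$ requires care about which test functions are available in the abstract functional Banach space $X$. The cleanest route, which I would follow, is to invoke the reproducing-kernel-type normalization: for each $w$ there is $f_w \in X$ with $\norm{f_w} = 1$ and $\modu{f_w(w)}$ equal to the operator norm of the evaluation functional at $w$, and then $\modu{(M_\psi f_w)(w)} = \modu{\psi(w)}\,\modu{f_w(w)}$ together with the fact that evaluation at $w$ applied to $M_\psi f_w$ cannot exceed $(\text{eval norm})\cdot\norm{M_\psi f_w} \le (\text{eval norm})\cdot\norm{M_\psi}$ yields exactly $\modu{\psi(w)} \le \norm{M_\psi}$ after cancelling the evaluation norm. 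I would cite \cite{DurenRombergShields:69} for the original argument and reproduce only as much detail as is needed for self-containedness.
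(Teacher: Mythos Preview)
The paper does not prove this lemma; it is stated with a citation to \cite[Lemma~11]{DurenRombergShields:69} and used as a black box. So there is no ``paper's own proof'' to compare against. Your outline---closed graph theorem for boundedness, then the evaluation-functional argument for $|\psi(w)|\le\|M_\psi\|$---is the standard proof and is essentially what appears in the cited reference.

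Two minor technical points to tighten. First, your cancellation step tacitly assumes the evaluation functional at $w$ is nonzero (equivalently, some $f\in X$ has $f(w)\ne 0$); you flag this but then hedge. In most treatments ``functional Banach space'' includes this nondegeneracy, and without it the inequality $|\psi(w)|\le\|M_\psi\|$ can genuinely fail, since $\psi(w)$ is then invisible to $M_\psi$. State the assumption explicitly rather than appealing to characteristic functions, which need not lie in an abstract $X$. Second, the unit ball need not contain an $f_w$ with $|f_w(w)|$ exactly equal to $\|e_w\|$; use a sequence with $|f_n(w)|\to\|e_w\|$ and pass to the limit. With those two fixes your argument is complete.
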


The following result is inspired by Lemma 2.10 of \cite{Tjani:96}, where the result was proved for Banach spaces of analytic functions on $\D$.  

\begin{lemma}\label{compact_lemma} Let $X,Y$ be two Banach spaces of functions on a tree $T$.  Suppose that
	\begin{enumerate}
		\item[\normalfont{(i)}] the point evaluation functionals of $X$ are bounded,
		\item[\normalfont{(ii)}] the closed unit ball of $X$ is a compact subset of $X$ in the topology of uniform convergence on compact sets,
		\item[\normalfont{(iii)}] $A:X \to Y$ is bounded when $X$ and $Y$ are given the topology of uniform convergence on compact sets.
	\end{enumerate}  Then $A$ is a compact operator if and only if given a bounded sequence $(f_n)$ in $X$ such that $f_n \to 0$ pointwise, then the sequence $(A f_n)$ converges to zero in the norm of $Y$.
\end{lemma}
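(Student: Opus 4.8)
The plan is to prove the two implications separately, with the forward direction ($A$ compact $\Rightarrow$ norm convergence) being the easy one and the converse requiring a diagonal-sequence argument together with hypotheses (i)--(iii).

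For the forward direction, suppose $A$ is compact and let $(f_n)$ be a bounded sequence in $X$ with $f_n \to 0$ pointwise. It suffices to show every subsequence of $(Af_n)$ has a further subsequence converging to $0$ in the norm of $Y$. Given a subsequence, compactness of $A$ produces a further subsequence $(f_{n_k})$ with $(Af_{n_k})$ converging in $Y$-norm to some $g \in Y$. Since the point evaluations of $Y$ are continuous in norm (this follows from hypothesis (iii) applied together with (i): $A$ being continuous into $Y$ with the topology of uniform convergence on compact sets forces pointwise evaluation on $Y$ to be comparable — more carefully, I would observe that norm convergence in $Y$ together with (iii) gives pointwise convergence $Af_{n_k} \to g$ pointwise), and since $f_{n_k} \to 0$ pointwise forces $Af_{n_k} \to 0$ pointwise by (iii), we conclude $g = 0$. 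Hence the original sequence $(Af_n)$ converges to $0$ in norm.

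For the converse, assume the stated sequential condition and let $(f_n)$ be any sequence in the closed unit ball of $X$; we must extract a subsequence $(f_{n_k})$ with $(Af_{n_k})$ norm-convergent in $Y$. By hypothesis (ii), the closed unit ball of $X$ is compact in the topology of uniform convergence on compact sets, so $(f_n)$ has a subsequence $(f_{n_k})$ converging uniformly on compact sets to some $f$ in the closed unit ball of $X$. The sequence $g_k := f_{n_k} - f$ is then bounded in $X$ (norm at most $2$) and, since $T$ is locally finite so that finite balls are compact, $g_k \to 0$ uniformly on compact sets, in particular $g_k \to 0$ pointwise. Applying the hypothesis to $(g_k)$ yields $\norm{A g_k} \to 0$, i.e. $Af_{n_k} \to Af$ in the norm of $Y$. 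This exhibits the required convergent subsequence of $(Af_n)$, so $A$ is compact.

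The main obstacle I anticipate is the bookkeeping around hypothesis (iii) in the forward direction: one must be careful that ``$A$ bounded in the topology of uniform convergence on compact sets'' delivers exactly what is needed, namely that $f_n \to 0$ uniformly on compacta implies $Af_n \to 0$ uniformly on compacta, hence pointwise — and then to identify the norm-limit $g$ with $0$ one needs point evaluations on $Y$ to separate points, which on a tree is automatic since the functions are genuinely defined pointwise. A minor subtlety in the converse is justifying that convergence uniformly on compact subsets of $T$ coincides with pointwise convergence, which holds precisely because $T$ is locally finite: every finite subset of $T$ is compact, and conversely compact subsets of $T$ are finite.
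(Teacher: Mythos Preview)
The paper does not actually give a proof of this lemma; it merely states the result as ``inspired by Lemma 2.10 of \cite{Tjani:96}'' and then remarks that all the spaces under consideration satisfy its hypotheses. So there is nothing to compare your argument against in the paper itself---you are supplying a proof where the authors only supplied a citation.

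Your argument is the standard Tjani-type proof and both directions are essentially correct. The converse direction is clean: hypothesis (ii) extracts a subsequence converging in the compact-open topology to some $f$ in the ball, and applying the sequential hypothesis to $g_k=f_{n_k}-f$ gives norm convergence $Af_{n_k}\to Af$. Your observation that on a locally finite tree the compact-open topology coincides with pointwise convergence (since compact subsets are finite) is exactly the right justification.

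The one point that deserves more care is in the forward direction, where you need $Af_{n_k}\to g$ in $Y$-norm to imply $Af_{n_k}\to g$ pointwise, so as to identify $g$ with the pointwise limit $0$. You write that it suffices for point evaluations on $Y$ to \emph{separate points}, but that is not enough: you need point evaluations on $Y$ to be \emph{continuous} with respect to the norm. This is not part of hypothesis (i), which only concerns $X$, nor does it follow from (iii). In fairness this is an imprecision inherited from the lemma as stated in the paper (and, in the paper's applications, $Y$ is always $L^\infty_\mu$ or $\mathcal{L}$, both of which are functional Banach spaces, so the issue never bites). If you want a self-contained statement, either add the hypothesis that point evaluations on $Y$ are bounded, or note explicitly that this holds in every case where the lemma is invoked.
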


All spaces mentioned in the previous section satisfy the conditions of the lemma, and we will make extensive use of this result in the study of compact multiplication operators among these spaces.

\section{Multiplication Operators on $\lunf$}\label{Section:WBS}
In this section, we study the multiplication operators acting on the weighted Banach space $\lunf$.  We begin by characterizing the bounded operators and determine the operator norm.  Then we also characterize the compact multiplication operators.  We also determine the spectrum of the bounded operators, and with this we characterize the bounded below multiplication operators.  Finally, we characterize the isometric multiplication operators on $\lunf$.

\subsection{Boundedness and operator norm}
In this section, we characterize the boundedness of the multiplication operators on $\lunf$, as well as determine the operator norm.

\begin{theorem}\label{thm:boundedness1}
	Let $\psi$ be a function on $T$. Then $M_\psi$ is bounded on $\lunf$ if and only if $\psi \in \linf$. Moreover, if $M_\psi$ is bounded on $\lunf$ then $\norm{M_\psi} = \inorm{\psi}$.
\end{theorem}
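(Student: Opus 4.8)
The plan is to prove the two directions of the equivalence together with the norm formula, exploiting the fact that $\lunf$ is a functional Banach space so that Lemma~\ref{bounded_mult} applies directly.

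First I would handle the easy direction and the upper norm bound simultaneously. Suppose $\psi \in \linf$. For any $f \in \lunf$ and any $v \in T$, compute $\mu(v)\modu{(\psi f)(v)} = \modu{\psi(v)}\,\mu(v)\modu{f(v)} \le \inorm{\psi}\,\morm{f}$. Taking the supremum over $v \in T$ gives $\morm{M_\psi f} \le \inorm{\psi}\morm{f}$, so $M_\psi$ maps $\lunf$ into itself, is bounded, and $\norm{M_\psi} \le \inorm{\psi}$. This is entirely routine.

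Next I would address the converse and the matching lower bound. Assume $M_\psi$ is bounded on $\lunf$. Since $\lunf$ is a functional Banach space (by the Proposition above), Lemma~\ref{bounded_mult} gives immediately that $\modu{\psi(w)} \le \norm{M_\psi}$ for every $w \in T$; taking the supremum over $w$ yields $\psi \in \linf$ with $\inorm{\psi} \le \norm{M_\psi}$. Combined with the previous paragraph, this gives $\norm{M_\psi} = \inorm{\psi}$. Alternatively, to keep the argument self-contained, one can test on the normalized characteristic functions: since $\morm{\chi_w} = \mu(w)$, the function $g_w = \chi_w/\mu(w)$ has $\morm{g_w} = 1$, and $\morm{M_\psi g_w} = \sup_{v}\mu(v)\modu{\psi(v)}\modu{g_w(v)} = \mu(w)\modu{\psi(w)}/\mu(w) = \modu{\psi(w)}$, so $\modu{\psi(w)} \le \norm{M_\psi}$ for all $w$, giving the same conclusion.

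I do not anticipate any real obstacle here: the weight $\mu$ cancels cleanly in both the pointwise estimate and the test-function computation, and Lemma~\ref{bounded_mult} was quoted precisely so that the ``$M_\psi$ into $\lunf$ $\Rightarrow$ $\psi$ bounded'' implication comes for free. The only point requiring a word of care is that boundedness of $M_\psi$ as an operator is already guaranteed by Lemma~\ref{bounded_mult} once we know $M_\psi$ maps $\lunf$ into itself, so the hypothesis ``$M_\psi$ is bounded'' in the statement could equivalently be read as ``$M_\psi$ maps $\lunf$ into $\lunf$''; I would phrase the proof so this is transparent.
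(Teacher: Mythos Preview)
Your proof is correct and follows essentially the same approach as the paper: one direction via Lemma~\ref{bounded_mult} (using that $\lunf$ is a functional Banach space) to get $\inorm{\psi}\le\norm{M_\psi}$, and the other via the trivial pointwise estimate $\mu(v)\modu{\psi(v)f(v)}\le\inorm{\psi}\morm{f}$ to get $\norm{M_\psi}\le\inorm{\psi}$. The only differences are cosmetic---you reverse the order of the two directions and add an optional self-contained test-function argument with $\chi_w/\mu(w)$, which the paper omits.
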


\begin{proof}
	Suppose $M_\psi : \lunf \to \lunf$ is bounded. Then since $\lunf$ is a functional Banach space on $T$, it follows from Lemma \ref{bounded_mult} that $\psi \in \linf$ and $\inorm{\psi} \leq \norm{M_\psi}$. 
	
	Suppose now that $\psi \in \linf$. For $f \in \lunf$ with $\morm{f}\leq 1$, we observe 	
	$$\morm{M_\psi f} = \sup_{v \in T}\; \mu(v) \modu{\psi(v)f(v)} \leq \inorm{\psi}.$$
	Since $\psi$ is bounded, we have $M_\psi$ is bounded on $\lunf$.  In addition, we obtain $\norm{M_\psi} = \inorm{\psi}$, as desired.
\end{proof}

\subsection{Compactness} 
In this section, we characterize the compact multiplication operators on $\lunf$.

\begin{theorem} Let $M_\psi$ be a bounded multiplication operator on $\lunf$.  Then $M_\psi$ is compact if and only if $\displaystyle\lim_{|v|\to \infty} \psi(v) = 0.$
\end{theorem}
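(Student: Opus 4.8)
The plan is to use Lemma \ref{compact_lemma}, which applies to $\lunf$ since it is a functional Banach space whose closed unit ball is compact in the topology of uniform convergence on compact sets (finitely many vertices), and $M_\psi$ is continuous in that topology. So $M_\psi$ is compact if and only if $\morm{M_\psi f_n} \to 0$ whenever $(f_n)$ is bounded in $\lunf$ and $f_n \to 0$ pointwise.

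For the forward direction, I would argue by contradiction: suppose $M_\psi$ is compact but $\psi(v)$ does not go to $0$ as $|v| \to \infty$. Then there is $\varepsilon > 0$ and a sequence of vertices $(v_n)$ with $|v_n| \to \infty$ and $|\psi(v_n)| \geq \varepsilon$. Consider the normalized characteristic functions $f_n = \chi_{v_n}/\mu(v_n)$; by the norm computation recorded in the excerpt, $\morm{f_n} = 1$, so $(f_n)$ is bounded, and since $|v_n| \to \infty$ each vertex is hit only finitely often, giving $f_n \to 0$ pointwise. But $\morm{M_\psi f_n} \geq \mu(v_n)|\psi(v_n)f_n(v_n)| = |\psi(v_n)| \geq \varepsilon$, contradicting Lemma \ref{compact_lemma}. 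Hence $\lim_{|v|\to\infty}\psi(v) = 0$.

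For the converse, assume $\lim_{|v|\to\infty}\psi(v) = 0$ and let $(f_n)$ be a sequence in $\lunf$ with $\morm{f_n} \leq C$ for all $n$ and $f_n \to 0$ pointwise. Fix $\varepsilon > 0$. By hypothesis there is $N$ such that $|\psi(v)| < \varepsilon/C$ whenever $|v| > N$. The set $F = \{v \in T : |v| \leq N\}$ is finite since $T$ is locally finite. For $v$ with $|v| > N$ we have $\mu(v)|\psi(v)f_n(v)| \leq |\psi(v)|\,\morm{f_n} < \varepsilon$, uniformly in $n$. For the finitely many $v \in F$, pointwise convergence $f_n(v) \to 0$ together with boundedness of $\psi$ on $F$ gives $\mu(v)|\psi(v)f_n(v)| \to 0$; taking the max over the finite set $F$, there is $N'$ so that this is $< \varepsilon$ for all $n \geq N'$ and all $v \in F$. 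Combining the two cases, $\morm{M_\psi f_n} = \sup_{v\in T}\mu(v)|\psi(v)f_n(v)| < \varepsilon$ for $n \geq N'$, so $\morm{M_\psi f_n} \to 0$, and $M_\psi$ is compact by Lemma \ref{compact_lemma}.

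The only mildly delicate point is making sure the estimate on the "tail" $\{|v| > N\}$ is genuinely uniform in $n$ — this is where boundedness of the sequence $(f_n)$ in $\lunf$ does the work — and that the "core" $F$ is finite, which is exactly local finiteness of the tree. Neither step presents a real obstacle; the argument is the standard "split into a finite part plus a small tail" scheme, and the characteristic functions provide the right test sequence for necessity.
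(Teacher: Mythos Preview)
Your proof is correct and follows the same strategy as the paper: both directions go through Lemma~\ref{compact_lemma}, and the necessity uses the same normalized characteristic functions $f_n=\chi_{v_n}/\mu(v_n)$. Your sufficiency argument, with its explicit finite-core/tail split using local finiteness, is in fact more carefully written than the paper's own version.
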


\begin{proof}
	Suppose that $M_\psi$ is compact on $\lunf$. Let $(v_n)$ be a sequence of vertices in $T$ such that $\modu{v_n} \to \infty$, and define for each $n \in \N$ the function $f_n = \dfrac{1}{\mu(v_n)}\chi_{v_n}$. Observe that $\morm{f_n} = 1$ for all $n \in \N$ and $f_n \to 0$ pointwise. From Lemma \ref{compact_lemma}, this implies that $\morm{M_\psi f_n} \to 0$ as $n \to \infty$. Additionally, $$\modu{\psi(v_n)} = \dfrac{\mu(v_n) \modu{\psi(v_n)}}{\mu(v_n)} \leq \sup_{v \in T} \mu(v) \modu{\psi(v)f_n(v) } = \morm{M_\psi f_n} \to 0.$$ Therefore, $\displaystyle \lim_{\modu{v} \to \infty} \psi(v) = 0$.
	
	Next, suppose that $(f_n)$ is a bounded sequence in $\lunf$ converging to 0 pointwise, and $\displaystyle \lim_{\modu{v} \to \infty} \psi(v) = 0$. Let $\varepsilon > 0$ and define $\displaystyle s = \sup_{n \in \N} \;\morm{f_n}$. We may choose $v \in T$ such that $\modu{\psi(v)} < \dfrac{\varepsilon}{s}$.  Observe that $$\mu(v)\modu{\psi(v)}\modu{f_n(v)} \leq \modu{\psi(v)}\morm{f_n} \leq s\modu{\psi(v)}  < \varepsilon.$$ Since $\varepsilon$ is an arbitrary positive number, we have $\mu(v)\modu{\psi(v)f_n(v)} = 0$ for all $v \in T$. Thus, $\morm{M_\psi f_n} \to 0$, and by Lemma \ref{compact_lemma}, we conclude that $M_\psi$ is compact.
\end{proof}

\subsection{Spectrum}
In this section we determine the spectrum and point spectrum of a bounded multiplication operator on $\lunf$. Recall that the \textit{spectrum} of a bounded operator $A$ on a Banach space $X$ is defined as 
\[ 	\sigma(A) = \set{\lambda \in \C}{A - \lambda I \ \text{is not invertible}}, 	\]
where $I$ is the identity operator on $X$. The spectrum of a bounded operator is a nonempty compact subset of $\C$.

The set of eigenvalues $\sigma_p(A)$ of a bounded operator $A$ is called the \textit{point spectrum} of $A$, that is 
\[ 	\sigma_p(A) = \set{\lambda \in \C}{\ker(A - \lambda I) \text{ is not trivial}}.	 \]

\noindent The \textit{approximate point spectrum} of $A$ is defined as the set $\sigma_{ap}(A)$ consisting of all $\lambda \in \C$ corresponding to which for each $n \in \N$ there exists $x_n \in X$ with $\norm{x_n} = 1$ such that $\norm{(A - \lambda I)x_n} \to 0$ as $n \to \infty$. It is clear that 
\begin{equation} \label{eq:1}
	\sigma_p(A) \subseteq \sigma_{ap}(A) \subseteq \sigma(A). 
\end{equation}

\begin{theorem}\label{thm:spectrum}
	Let $M_\psi$ be a bounded multiplication operator on $\lunf$. Then 
	\begin{enumerate} 
		\item[\normalfont{(i)}] $\sigma_p(M_\psi) = \psi(T)$,
		\item[\normalfont{(ii)}] $\sigma(M_\psi) = \sigma_{ap}(M_\psi) = \overline{\psi(T)}$.
	\end{enumerate}
\end{theorem}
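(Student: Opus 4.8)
The plan is to treat the two parts in turn, using the multiplicative structure of $M_\psi$ to reduce everything to pointwise statements about $\psi$. For part (i), I would first show $\psi(T) \subseteq \sigma_p(M_\psi)$: if $\lambda = \psi(w)$ for some $w \in T$, then the characteristic function $\chi_w$ is a nonzero element of $\lunf$ (its norm is $\mu(w) > 0$) satisfying $(M_\psi - \lambda I)\chi_w = (\psi - \psi(w))\chi_w = 0$, so $\lambda \in \sigma_p(M_\psi)$. Conversely, if $\lambda \in \sigma_p(M_\psi)$, there is a nonzero $f \in \lunf$ with $(\psi(v) - \lambda)f(v) = 0$ for all $v \in T$; since $f$ is not identically zero, $f(w) \neq 0$ for some $w$, forcing $\psi(w) = \lambda$, hence $\lambda \in \psi(T)$.

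For part (ii), by the inclusions in \eqref{eq:1} it suffices to show $\overline{\psi(T)} \subseteq \sigma_{ap}(M_\psi)$ and $\sigma(M_\psi) \subseteq \overline{\psi(T)}$. For the first inclusion, fix $\lambda \in \overline{\psi(T)}$ and pick $(v_n)$ in $T$ with $\psi(v_n) \to \lambda$; the normalized characteristic functions $x_n = \chi_{v_n}/\mu(v_n)$ have $\morm{x_n} = 1$ and $\morm{(M_\psi - \lambda I)x_n} = \morm{(\psi - \lambda)x_n} = |\psi(v_n) - \lambda| \to 0$, so $\lambda \in \sigma_{ap}(M_\psi)$; since $\sigma_{ap}$ is contained in the closed set $\sigma(M_\psi)$, and $\psi(T) \subseteq \sigma_p(M_\psi) \subseteq \sigma_{ap}(M_\psi)$ by part (i), taking closures gives $\overline{\psi(T)} \subseteq \sigma_{ap}(M_\psi)$. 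For the reverse inclusion $\sigma(M_\psi) \subseteq \overline{\psi(T)}$, I would show that if $\lambda \notin \overline{\psi(T)}$ then $M_\psi - \lambda I = M_{\psi - \lambda}$ is invertible: the function $1/(\psi - \lambda)$ is well-defined on $T$, and since $\mathrm{dist}(\lambda, \psi(T)) = \delta > 0$ we have $\inorm{1/(\psi - \lambda)} \leq 1/\delta < \infty$, so by Theorem \ref{thm:boundedness1} the operator $M_{1/(\psi - \lambda)}$ is bounded on $\lunf$ and is a two-sided inverse for $M_{\psi - \lambda}$. Hence $\lambda \notin \sigma(M_\psi)$, which proves the contrapositive.

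Combining the chain $\overline{\psi(T)} \subseteq \sigma_{ap}(M_\psi) \subseteq \sigma(M_\psi) \subseteq \overline{\psi(T)}$ forces all three sets to coincide, establishing (ii).

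The argument is essentially routine once one notes that the characteristic functions $\chi_w$ are exactly the right test functions — they are eigenvectors of $M_\psi$ — and that $M_\psi - \lambda I$ is again a multiplication operator. The only point requiring any care is the invertibility step: one must verify that $1/(\psi - \lambda)$ genuinely lies in $\linf$, which hinges on $\lambda$ being at positive distance from the \emph{whole} of $\psi(T)$ (not merely $\psi(T) \setminus \{\lambda\}$), and this is precisely what $\lambda \notin \overline{\psi(T)}$ provides; I do not anticipate any deeper obstacle.
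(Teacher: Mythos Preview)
Your proposal is correct and follows essentially the same route as the paper: characteristic functions $\chi_w$ supply eigenvectors for part (i), and boundedness of $M_{1/(\psi-\lambda)}$ via Theorem~\ref{thm:boundedness1} yields the invertibility for part (ii). The one minor difference is in establishing $\sigma_{ap}(M_\psi)=\overline{\psi(T)}$: the paper invokes the general fact $\partial\sigma(A)\subseteq\sigma_{ap}(A)$ from \cite{Conway:90}, whereas you construct approximate eigenvectors $\chi_{v_n}/\mu(v_n)$ directly, which is slightly more self-contained (and makes your parenthetical ``taking closures'' sentence redundant, since your direct argument already covers every $\lambda\in\overline{\psi(T)}$).
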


\begin{proof}
	Suppose that $\lambda \in \sigma_p(M_\psi)$. Then there exists a nonzero function $f \in \lunf$ with $M_\psi f = \lambda f$. Since $f \not\equiv 0$, there is a vertex $w \in T$ such that $f(w) \neq 0$. Consequently, 
	\[ 	\psi(w)f(w) = (\psi f)(w) = (M_\psi f)(w) = (\lambda f)(w) = \lambda f(w).	 	\] 
	In fact, this implies that $\lambda = \psi(w)$. Therefore, $\lambda \in \psi(T)$, and so $\sigma_p(M_\psi) \subseteq \psi(T)$. 
	
	Suppose now that $\lambda \in \psi(T)$. Then there exists a vertex $w \in T$ such that $\lambda = \psi(w)$. Recall that for the characteristic function $\chi_w$ is in $\lunf$. Observe, 
	\[ 	(M_\psi \chi_w)(w) = \psi(w)\chi_w(w) = \lambda\chi_w(w) = (\lambda\chi_w)(w). 	\] 
	Furthermore $(M_\psi \chi_w - \lambda \chi_w)(w) = 0$, and $(M_\psi \chi_w - \lambda \chi_w)(v) = 0$ for all $v \neq w$. Since $\chi_w \not\equiv 0$ and $(M_\psi \chi_w - \lambda \chi_w) \equiv 0$, it follows that $\lambda \in \sigma_p(M_\psi)$. Therefore, $\psi(T) \subseteq \sigma_p(M_\psi)$, thus concluding $\sigma_p(M_\psi) = \psi(T)$.
	
	We next show that $\sigma(M_\psi) = \overline{\psi(T)}$. Since $\psi(T) = \sigma_p(M_\psi) \subseteq \sigma(M_\psi)$, then passing to the closure we obtain $\overline{\psi(T)} \subseteq \sigma(M_\psi)$. Suppose now that $\lambda \notin \overline{\psi(T)}$. Then there exists a $c > 0$ such that $\modu{\psi(v) - \lambda} \geq c$ for all $v \in T$. Define the function $\varphi_\lambda$ by $\varphi_\lambda(v) = (\psi(v) - \lambda)^{-1}$. Observe that 
	\[ 	\modu{\varphi_\lambda(v)} = \dfrac{1}{\modu{\psi(v) - \lambda}} \leq \dfrac{1}{c}, 	\]
	and so $\varphi_\lambda$ is bounded. Thus, by Theorem \ref{thm:boundedness1}, we have that $M_{\varphi_\lambda}$ is bounded on $\lunf$. Since $M_{\varphi_\lambda} = M_{(\psi - \lambda)^{-1}} = M^{-1}_{\psi - \lambda}$, it follows that $M^{-1}_{\psi - \lambda}$ is bounded on $\lunf$. Observe that 
	\[ 	(M^{-1}_{\psi - \lambda})^{-1}f = \left( \dfrac{1}{\psi - \lambda} \right)^{-1}f = M_{\psi - \lambda}f, 	\] 
	so that we have 
	\[	M_{\psi - \lambda} \left(M_{\psi - \lambda}^{-1} f\right) = (\psi - \lambda)\left(\dfrac{1}{\psi - \lambda} f \right) = f. 	\]
	Consequently, $M_{\psi - \lambda}$ is invertible on $\lunf$, which proves that $\lambda \notin \sigma(M_\psi)$. 
	
	Therefore, it has been shown that $\overline{\psi(T)} = \sigma(M_\psi)$. Furthermore, it is well known (\cite{Conway:90}, Proposition 6.7) that 
	\[ 	\partial \sigma(A) \subseteq \sigma_{ap}(A). 	\] 
	From this and (\ref{eq:1}) we have that $\sigma_{ap}(T) = \overline{\psi(T)}$, as desired.
\end{proof}

Recall that a bounded operator $A$ on a Banach space $X$ is {\it bounded below} if there exists a positive constant $C$ such that $\norm{Ax} \geq C\norm{x}$ for each $x \in X$.  The following result relates the notion of approximate point spectrum of a bounded operator on a Banach space to the notion of an associated operator that is bounded below.

\begin{proposition}\cite[Proposition 6.4]{Conway:90}\label{prop:spectrum}
	For a bounded operator $A$ on a Banach space and for $\lambda \in \C$, the following statements are equivalent: 
	\begin{enumerate}
		\item[\normalfont{(i)}] $\lambda \notin \sigma_{ap}(A)$.
		\item[\normalfont{(ii)}] $A - \lambda I$ is injective and has closed range.
		\item[\normalfont{(iii)}] $A - \lambda I$ is bounded below.	
	\end{enumerate}
\end{proposition}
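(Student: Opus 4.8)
The plan is to reduce everything to the single bounded operator $B = A - \lambda I$ and to establish the chain of implications $\text{(iii)}\Rightarrow\text{(ii)}\Rightarrow\text{(iii)}$ together with the equivalence $\text{(i)}\Leftrightarrow\text{(iii)}$; these together close the cycle and give the full equivalence. Throughout one only uses homogeneity of the norm, the triangle inequality, completeness of $X$, and the open mapping theorem.

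First I would dispatch $\text{(i)}\Leftrightarrow\text{(iii)}$. By homogeneity, $B$ is bounded below with constant $C$ exactly when $\inf_{\norm{x}=1}\norm{Bx}\geq C>0$. If this infimum were $0$, one could choose for each $n$ a unit vector $x_n$ with $\norm{Bx_n}<1/n$, producing a sequence witnessing $\lambda\in\sigma_{ap}(A)$; conversely, any sequence of unit vectors $x_n$ with $\norm{(A-\lambda I)x_n}\to 0$ forces that infimum to be $0$, so $B$ fails to be bounded below. Hence $\lambda\notin\sigma_{ap}(A)$ if and only if $B$ is bounded below.

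Next, $\text{(iii)}\Rightarrow\text{(ii)}$: if $\norm{Bx}\geq C\norm{x}$ for all $x$, then $Bx=0$ forces $x=0$, so $B$ is injective. For closedness of the range, suppose $Bx_n\to y$ in $Y$; then $(Bx_n)$ is Cauchy, and the estimate $\norm{x_n-x_m}\leq C^{-1}\norm{Bx_n-Bx_m}$ shows $(x_n)$ is Cauchy in $X$, hence convergent to some $x$, and continuity of $B$ gives $Bx=y$, so $y$ lies in the range. Thus the range is closed.

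Finally, $\text{(ii)}\Rightarrow\text{(iii)}$: this is the one step that genuinely uses completeness, and I expect it to be the only nontrivial point. If $B$ is injective with closed range $R:=B(X)$, then $R$, being a closed subspace of a Banach space, is itself a Banach space, and $B\colon X\to R$ is a continuous linear bijection between Banach spaces. By the open mapping theorem its inverse $B^{-1}\colon R\to X$ is bounded, say $\norm{B^{-1}y}\leq K\norm{y}$; applying this with $y=Bx$ yields $\norm{x}\leq K\norm{Bx}$, i.e.\ $B$ is bounded below with constant $K^{-1}$. This closes the cycle and establishes the proposition.
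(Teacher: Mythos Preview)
Your proof is correct and complete: the equivalence $\text{(i)}\Leftrightarrow\text{(iii)}$ is the definition of the approximate point spectrum unwound via homogeneity, $\text{(iii)}\Rightarrow\text{(ii)}$ is the standard Cauchy-sequence argument, and $\text{(ii)}\Rightarrow\text{(iii)}$ is the open mapping theorem applied to $B\colon X\to B(X)$.

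However, there is nothing to compare against in the paper. The paper does not prove this proposition at all; it merely quotes it from Conway's textbook (\cite[Proposition 6.4]{Conway:90}) and uses it as a black box to deduce the corollary on bounded-below multiplication operators. So your argument is not an alternative to the paper's proof but rather a supplied proof for a result the authors chose to cite externally. Your write-up is essentially the standard textbook proof one finds in Conway.
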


From Proposition \ref{prop:spectrum} and Theorem \ref{thm:boundedness1}, it follows that if $M_\psi$ is a bounded multiplication operator on $\lunf$, then $M_\psi$ is bounded below if and only if $0 \notin \overline{\psi(T)}$. We deduce the following result.

\begin{corollary}
	The bounded operator $M_\psi$ on $\lunf$ is bounded below if and only if $$\displaystyle\inf_{v \in T}\; |\psi(v)| > 0.$$
\end{corollary}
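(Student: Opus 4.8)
The plan is to read this off from the observation recorded just before the statement: by Proposition~\ref{prop:spectrum} together with Theorem~\ref{thm:spectrum}, a bounded multiplication operator $M_\psi$ on $\lunf$ is bounded below if and only if $0 \notin \sigma_{ap}(M_\psi) = \overline{\psi(T)}$. Hence the entire content of the corollary is the elementary equivalence
\[
0 \notin \overline{\psi(T)} \quad \Longleftrightarrow \quad \inf_{v \in T}\;\modu{\psi(v)} > 0,
\]
which I would establish as follows. For the forward direction, note that $\C \setminus \overline{\psi(T)}$ is open and contains $0$, so there is $r > 0$ with the disk of radius $r$ centered at $0$ disjoint from $\overline{\psi(T)} \supseteq \psi(T)$; thus $\modu{\psi(v)} \geq r$ for every $v \in T$, giving $\inf_{v \in T}\modu{\psi(v)} \geq r > 0$. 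For the converse, put $c = \inf_{v \in T}\modu{\psi(v)} > 0$; then $\modu{\psi(v)} \geq c$ for all $v$, and since $z \mapsto \modu{z}$ is continuous, every point $\lambda$ of $\overline{\psi(T)}$ satisfies $\modu{\lambda} \geq c$, so $0 \notin \overline{\psi(T)}$.

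Combining this equivalence with the bounded-below criterion recorded above yields precisely the statement. As a bookkeeping refinement I would also exhibit the sharp constant directly: if $c = \inf_{v \in T}\modu{\psi(v)} > 0$, then for every $f \in \lunf$,
\[
\morm{M_\psi f} = \sup_{v \in T}\;\mu(v)\modu{\psi(v)}\modu{f(v)} \geq c \sup_{v \in T}\;\mu(v)\modu{f(v)} = c\,\morm{f},
\]
so $M_\psi$ is bounded below with constant $C = \inf_{v \in T}\modu{\psi(v)}$, which pairs nicely with the identity $\norm{M_\psi} = \inorm{\psi}$ from Theorem~\ref{thm:boundedness1}.

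I do not anticipate any genuine obstacle here: the result is essentially a restatement of the spectral picture from Theorem~\ref{thm:spectrum}, and the only thing requiring a word of justification is the topological reformulation of ``$0$ is not in the closure of $\psi(T)$'' as ``$\psi$ is bounded away from $0$'', which is routine real analysis. The mild care needed is simply to keep straight that $\inf_{v\in T}\modu{\psi(v)}$ is the distance from $0$ to $\psi(T)$ (equivalently to $\overline{\psi(T)}$), so that positivity of that infimum is exactly the separation condition.
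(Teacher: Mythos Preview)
Your proposal is correct and follows essentially the same approach as the paper: the paper does not give a separate proof of the corollary but simply notes (in the sentence preceding it) that Proposition~\ref{prop:spectrum} together with the spectral computation yields $M_\psi$ bounded below iff $0\notin\overline{\psi(T)}$, and then states the corollary as an immediate consequence. Your write-up supplies the routine topological equivalence $0\notin\overline{\psi(T)}\Leftrightarrow\inf_{v\in T}\modu{\psi(v)}>0$ that the paper leaves implicit, and your direct verification of the bounded-below constant is a harmless (and pleasant) addition.
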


\subsection{Isometries}
In this section, we characterize the isometric multiplication operators on $\lunf$.

Given Banach spaces $X$ and $Y$, recall that a linear operator $A: X \to Y$ is an {\it isometry} if \[ \norm{Ax} = \norm{x} \hskip10pt \text{for all } x \in X. \] In this section, we show the isometric multiplication operators on $\lunf$ are induced by a particular class of bounded symbol.

\begin{theorem}
	The multiplication operator $M_\psi$ on $\lunf$ is an isometry if and only if $\psi$ is a function of modulus 1. 
\end{theorem}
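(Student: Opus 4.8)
The plan is to prove both directions by exploiting the extremal test functions that the space $\lunf$ naturally supplies, namely the rescaled characteristic functions $\frac{1}{\mu(w)}\chi_w$, which have $\morm{\cdot}$-norm exactly $1$. First I would handle the easy direction: suppose $\modu{\psi(v)} = 1$ for every $v \in T$. Then for any $f \in \lunf$ we have $\mu(v)\modu{\psi(v)f(v)} = \mu(v)\modu{f(v)}$ for each $v$, so taking the supremum over $v \in T$ gives $\morm{M_\psi f} = \morm{f}$; hence $M_\psi$ is an isometry. (One should first note that such a $\psi$ is bounded, with $\inorm{\psi} = 1$, so by Theorem \ref{thm:boundedness1} the operator is indeed a bounded operator on $\lunf$.)

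For the converse, suppose $M_\psi$ is an isometry on $\lunf$. Being an isometry it is in particular bounded, so by Theorem \ref{thm:boundedness1} we know $\psi \in \linf$ and $\norm{M_\psi} = \inorm{\psi} = 1$; thus $\modu{\psi(v)} \leq 1$ for all $v$. It remains to rule out $\modu{\psi(w)} < 1$ at any vertex $w$. Fix $w \in T$ and apply the isometry to the unit-norm function $f = \frac{1}{\mu(w)}\chi_w$: since $\chi_w$ is supported only at $w$, we get
\[
1 = \morm{f} = \morm{M_\psi f} = \sup_{v \in T}\, \mu(v)\,\modu{\psi(v)}\,\frac{1}{\mu(w)}\,\modu{\chi_w(v)} = \mu(w)\,\modu{\psi(w)}\,\frac{1}{\mu(w)} = \modu{\psi(w)}.
\]
Since $w$ was arbitrary, $\modu{\psi(v)} = 1$ for all $v \in T$, completing the proof.

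I do not anticipate a genuine obstacle here: the characteristic functions $\chi_w$ are exactly the ``extremal'' elements that isolate a single vertex, and the weight $\mu(w)$ cancels cleanly in the normalized version $\frac{1}{\mu(w)}\chi_w$. The only point requiring a moment's care is the bookkeeping to ensure $M_\psi$ is well-defined and bounded before invoking Theorem \ref{thm:boundedness1} in the forward direction — but this is immediate since an isometry between normed spaces is automatically bounded. The norms of the characteristic functions have already been recorded in the excerpt, so the computation $\morm{\chi_w} = \mu(w)$ can be cited rather than re-derived.
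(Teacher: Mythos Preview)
Your proof is correct and takes essentially the same approach as the paper: sufficiency is a one-line computation, and necessity tests the isometry on characteristic functions to isolate each vertex. The paper applies the isometry to the unnormalized $\chi_w$ (obtaining $\mu(w)=\morm{\chi_w}=\morm{M_\psi\chi_w}=\mu(w)\modu{\psi(w)}$) and omits your preliminary bound $\modu{\psi}\leq 1$, which is in any case redundant since the characteristic-function step already yields equality---but these are cosmetic differences.
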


\begin{proof}
	Suppose that $\psi$ is a function of modulus 1 and let $f \in \lunf$. It follows that \[ \morm{M_\psi f} = \sup_{v \in T}\; \mu(v)\modu{\psi(v)f(v)} = \sup_{v \in T}\; \mu(v)\modu{f(v)} = \morm{f}, \] and thus $M_\psi$ is an isometry. 
	
	Suppose now that $M_\psi$ is an isometry.  For $w \in T$, observe $$\mu(w) = \morm{\chi_w} = \morm{M_\psi\chi_w} = \mu(w)|\psi(w)|.$$  Since $\mu(w) >0$, it must be the case that $|\psi(w)| = 1$.  Thus, $\psi$ is a function of modulus 1.
\end{proof}

\section{Multiplication Operators from $\ls$ to $\lunf$}\label{Section:ls2lunf}
In the next two sections, we study the connections between $\lunf$ and the Lipschitz space $\ls$.  Specifically, in this section we study the multiplication operators from $\ls$ to $\lunf$.  We characterize the bounded operators and determine estimates on their operator norm.  In addition, we characterize the compact operators as well as determine there are no isometries among the multiplication operators.

\subsection{Boundedness and operator norm} In this section, we characterize the bounded multiplication operators from $\ls$ to $\lunf$ as well as determine operator norm estimates.

For a function $\psi$ on $T$, define $$\eta_\psi = \sup_{v \in T}\; \mu(v) \modu{v} \modu{\psi(v)}.$$  Note that $\eta_\psi = 0$ if and only if $\psi$ is identically 0 on $T^*$.

\begin{theorem}\label{Bounded, ls to lunf}
	Let $\psi$ be a function on $T$. Then $M_\psi: \ls \to \lunf$ is bounded if and only if $\eta_\psi$ is finite. Furthermore, for such bounded multiplication operators, $$\max\left\{\dfrac{1}{2}\mu(o)\modu{\psi(o)}, \eta_\psi\right\} \leq \norm{M_\psi} \leq \max\left\{\mu(o)\modu{\psi(o)}, \eta_\psi\right\}.$$
\end{theorem}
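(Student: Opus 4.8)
The plan is to prove the two implications of the characterization and then establish the norm estimates, using the point-evaluation bound from Lemma~\ref{Lemma 3.4, ColonnaEasley:10} for $\ls$ and the test functions $\chi_w$ whose norms in both $\ls$ and $\lunf$ are recorded in the excerpt.

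First I would handle the easy direction and the upper norm estimate together. Assume $\eta_\psi < \infty$ and take $f \in \ls$ with $\lorm{f} \leq 1$. For any $v \in T$ I estimate $\mu(v)|\psi(v)f(v)|$ by splitting into the cases $v = o$ and $v \in T^*$. For $v = o$ we get $\mu(o)|\psi(o)||f(o)| \leq \mu(o)|\psi(o)|$ since $|f(o)| \leq \lorm{f} \leq 1$. For $v \in T^*$ we use Lemma~\ref{Lemma 3.4, ColonnaEasley:10} in the form $|f(v)| \leq |v|$ (valid when $\lorm{f}\leq 1$), hence $\mu(v)|\psi(v)||f(v)| \leq \mu(v)|v||\psi(v)| \leq \eta_\psi$. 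Taking the supremum over $v \in T$ yields $\morm{M_\psi f} \leq \max\{\mu(o)|\psi(o)|, \eta_\psi\}$, which shows $M_\psi$ is bounded and simultaneously gives the upper bound $\norm{M_\psi} \leq \max\{\mu(o)|\psi(o)|, \eta_\psi\}$.

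Next I would establish the lower norm estimate, which also gives the forward implication (boundedness forces $\eta_\psi < \infty$). To bound $\norm{M_\psi}$ below by $\eta_\psi$, for each $w \in T^*$ I test against $\chi_w$, noting $\lorm{\chi_w} = 1$ for $w \neq o$. Then $\norm{M_\psi} \geq \morm{M_\psi \chi_w} \geq \mu(w)|\psi(w)\chi_w(w)| = \mu(w)|\psi(w)|$; but I actually want $\mu(w)|w||\psi(w)|$, so instead I should test against a function that is $0$ at the root, still has $\ls$-norm at most $1$, but takes value (roughly) $|w|$ at $w$ — for instance $f_w(v) = d(v \wedge w, o)$ suitably normalized, or more simply the function $g_w$ defined to increase by $1$ along the geodesic from $o$ to $w$ and be constant on the rest of the tree. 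Such $g_w$ satisfies $g_w(o) = 0$, $Dg_w(v) \in \{0,1\}$, so $\lorm{g_w} = 1$, and $g_w(w) = |w|$. Then $\norm{M_\psi} \geq \morm{M_\psi g_w} \geq \mu(w)|\psi(w)||g_w(w)| = \mu(w)|w||\psi(w)|$, and taking the supremum over $w \in T^*$ gives $\norm{M_\psi} \geq \eta_\psi$. Separately, testing against $\chi_o$ (which has $\lorm{\chi_o} = 2$) gives $\norm{M_\psi} \geq \tfrac12 \morm{M_\psi \chi_o} = \tfrac12 \mu(o)|\psi(o)|$. Combining, $\norm{M_\psi} \geq \max\{\tfrac12 \mu(o)|\psi(o)|, \eta_\psi\}$. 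If $M_\psi$ is bounded then the left side is finite, forcing $\eta_\psi < \infty$, which completes the remaining implication.

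The main obstacle is the lower bound $\norm{M_\psi} \geq \eta_\psi$: one must exhibit, for each fixed $w \in T^*$, a genuine element of the unit ball of $\ls$ whose value at $w$ is as large as possible (namely $|w|$, the bound from Lemma~\ref{Lemma 3.4, ColonnaEasley:10}), and verifying that the proposed $g_w$ lies in the unit ball requires checking $\sup_{v \in T^*} Dg_w(v) \leq 1$ along every branch, not just the geodesic to $w$ — this is routine but must be stated carefully. Everything else (normed-space axioms, the supremum manipulations) is straightforward, and the factor $\tfrac12$ in the lower bound versus $1$ in the upper bound is an unavoidable artifact of $\lorm{\chi_o} = 2$, so no sharper constant at the root should be expected from this method.
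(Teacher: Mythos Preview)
Your argument is correct. The upper bound and the $\tfrac12\mu(o)|\psi(o)|$ lower bound match the paper's proof essentially verbatim. The only genuine difference is in how you obtain $\norm{M_\psi}\geq\eta_\psi$: the paper uses the single global test function $f(v)=|v|$, which satisfies $\lorm{f}=1$ (since $f(o)=0$ and $Df\equiv 1$) and gives $\morm{M_\psi f}=\sup_{v\in T}\mu(v)|v||\psi(v)|=\eta_\psi$ in one stroke. You instead build, for each $w\in T^*$, a function $g_w$ that climbs along the geodesic to $w$ and is constant off it, then take a supremum over $w$. Your $g_w$ is fine (indeed $g_w(v)=|v\wedge w|$, so $Dg_w\in\{0,1\}$ on every branch and $\lorm{g_w}=1$), but note that $f(v)=|v|$ already dominates every $g_w$ pointwise and sits in the unit ball, so the family is unnecessary. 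The paper's choice is shorter and avoids the branch-by-branch verification you flagged as the main obstacle; your approach, on the other hand, is the one that would generalize if the target norm depended on $f$ only through its value at a single point.
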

\begin{proof}
	Suppose $M_\psi$ is bounded. For the function $f(v) = \modu{v}$, note that $\lorm{f} = 1$ and observe that 
	\begin{equation}\label{thisisequation}
		\sup_{v \in T} \mu(v)\modu{v}\modu{\psi(v)} = \morm{M_\psi f} \leq \norm{M_\psi}.
	\end{equation}  From the boundedness of $M_\psi$, we have that $\eta_\psi$ is finite.  For the function $f = \frac12\chi_o,$ note that $\lorm{f}=1$ and observe
	\begin{equation}\label{eqat}
		\frac12\mu(o)\modu{\psi(o)} = \morm{M_\psi \chi_o} \leq \norm{M_\psi}.
	\end{equation}
	From (\ref{thisisequation}) and (\ref{eqat}), we obtain $\max\left\{\frac12\mu(o)|\psi(o)|,\eta_\psi\right\} \leq \norm{M_\psi}$.
	
	Suppose now that $\eta_\psi$ is finite. Let $g \in \ls$ with $\lorm{g} = 1$. Observe that $\modu{g(o)} \leq 1$, and so \begin{equation}\mu(o)\modu{\psi(o) g(o)} \leq \mu(o)\modu{\psi(o)}.\label{4.1.1}\end{equation} Furthermore, by Lemma \ref{Lemma 3.4, ColonnaEasley:10}, for $v \neq o$, we have $\modu{g(v)} \leq \modu{v}$ so that \begin{equation}\mu(v) \modu{\psi(v) g(v)} \leq \mu(v) \modu{v} \modu{\psi(v)}.\label{4.1.2}\end{equation} Taking the supremum over all $v \in T^*$, we see that $\displaystyle \sup_{v \in T^*} \mu(v) \modu{\psi(v) g(v)} \leq \eta_\psi$. Finally, from (\ref{4.1.1}), (\ref{4.1.2}), and by taking the supremum over all $g \in \ls$ with $\lorm{g} = 1$, the boundedness of $M_\psi$ is established. We conclude that $\norm{M_\psi} \leq \max\left\{\mu(o)|\psi(o)|,\eta_\psi\right\}$, as desired.  
\end{proof}

\subsection{Compactness}
In this section we characterize the compact multiplication operators from $\ls$ to $\lunf$.

We first show a particular class of symbol $\psi$ induces compact multiplication operators from $\ls$ to $\lunf$.

\begin{lemma}\label{compactness_special_case}
	Let $\psi$ be a function on $T$ such that $\eta_\psi = 0$.  Then $M_\psi:\ls \to \lunf$ is compact.
\end{lemma}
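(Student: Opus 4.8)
The plan is to apply Lemma \ref{compact_lemma} with $X = \ls$ and $Y = \lunf$. Both spaces satisfy the hypotheses of that lemma (point evaluations are bounded by Lemma \ref{Lemma 3.4, ColonnaEasley:10} and the proposition preceding it; the closed unit balls are compact in the topology of uniform convergence on compact sets; and $M_\psi$ is continuous in that topology since multiplication is a pointwise operation). So it suffices to take a bounded sequence $(f_n)$ in $\ls$ with $f_n \to 0$ pointwise and show $\morm{M_\psi f_n} \to 0$.

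First I would record what $\eta_\psi = 0$ means: $\mu(v)\modu{v}\modu{\psi(v)} = 0$ for every $v \in T$, hence $\psi(v) = 0$ for all $v \in T^*$ (since $\mu$ is a positive weight and $\modu{v} \geq 1$ on $T^*$), while $\psi(o)$ may be nonzero. Therefore $M_\psi f_n$ is supported only at the root: $(M_\psi f_n)(v) = 0$ for $v \neq o$ and $(M_\psi f_n)(o) = \psi(o) f_n(o)$. Consequently $\morm{M_\psi f_n} = \mu(o)\modu{\psi(o)}\modu{f_n(o)}$.

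Now the pointwise convergence $f_n \to 0$ applies in particular at the vertex $o$, so $f_n(o) \to 0$, and hence $\morm{M_\psi f_n} = \mu(o)\modu{\psi(o)}\modu{f_n(o)} \to 0$. By Lemma \ref{compact_lemma}, $M_\psi : \ls \to \lunf$ is compact. (One should also note $M_\psi$ is indeed bounded in the first place, so that "compact operator" is meaningful — this follows from Theorem \ref{Bounded, ls to lunf} since $\eta_\psi = 0$ is finite, giving $\norm{M_\psi} \leq \mu(o)\modu{\psi(o)} < \infty$.)

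There is essentially no obstacle here: the hypothesis $\eta_\psi = 0$ is so strong that it collapses the image of $M_\psi$ to a one-dimensional space (multiples of $\chi_o$), and compactness is then immediate. The only point requiring a little care is the verification that $\ls$ and $\lunf$ meet the three hypotheses of Lemma \ref{compact_lemma}, but the paper has already asserted that all the spaces under consideration do so.
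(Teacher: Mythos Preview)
Your proposal is correct and follows essentially the same argument as the paper: both observe that $\eta_\psi = 0$ forces $\psi$ to vanish on $T^*$, invoke Theorem \ref{Bounded, ls to lunf} for boundedness, and then apply Lemma \ref{compact_lemma} by computing $\morm{M_\psi f_n} = \mu(o)\modu{\psi(o)}\modu{f_n(o)} \to 0$. Your additional remark that the range of $M_\psi$ is one-dimensional is a nice observation not made explicit in the paper, but the underlying proof is the same.
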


\begin{proof} Recall $\eta_\psi = 0$ if and only if $\psi$ identically 0 on $T^*$.  By Theorem \ref{Bounded, ls to lunf} we have $M_\psi:\ls \to \lunf$ is bounded.  Also, if $\psi$ is identically 0 on $T$, then $M_\psi$ is clearly compact.  So suppose $\psi$ is identically 0 on $T^*$ with $\psi(o) \neq 0$ and let $(f_n)$ be a bounded sequence in $\ls$ converging to 0 pointwise on $T$.  By Lemma \ref{compact_lemma}, it suffices to show $\morm{M_\psi f_n} \to 0$ as $n \to \infty$.  Observe 
	$$\morm{M_\psi f_n} = \sup_{v \in T}\;\mu(v)\modu{\psi(v)f_n(v)} = \mu(o)\modu{\psi(o)}\modu{f_n(o)} \to 0$$ as $n \to \infty$ since $f_n \to 0$ pointwise on $T$.  Thus $M_\psi$ is compact.
\end{proof}

\begin{theorem}\label{Compactness, ls to lunf}
	Let $M_\psi: \ls \to \lunf$ be a bounded multiplication operator. Then $M_\psi$ is compact if and only if $\displaystyle \lim_{\modu{v} \to \infty} \mu(v)\modu{v} \modu{\psi(v)} = 0$.
\end{theorem}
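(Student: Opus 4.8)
# Proof proposal for Theorem \ref{Compactness, ls to lunf}

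The plan is to follow the same two-direction template used throughout the paper, invoking Lemma \ref{compact_lemma} in both directions since $\ls$ and $\lunf$ both satisfy its hypotheses. Having disposed of the degenerate case in Lemma \ref{compactness_special_case}, I may assume $\eta_\psi > 0$ (i.e. $\psi$ is not identically zero on $T^*$), although the argument below does not really need this.

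For the forward direction, assume $M_\psi$ is compact and let $(v_n)$ be any sequence in $T$ with $\modu{v_n} \to \infty$; in particular $v_n \in T^*$ eventually. Define the test functions $f_n = \frac{1}{\modu{v_n}}\chi_{S_{v_n}}$, the normalized characteristic function of the sector determined by $v_n$. First I would check $\lorm{f_n} \le 1$: we have $f_n(o) = 0$ (for $\modu{v_n}$ large), and $Df_n(v)$ is nonzero only when $v = v_n$, where it equals $1/\modu{v_n}$, so $\lorm{f_n} = 1/\modu{v_n} \to 0$. Next, $f_n \to 0$ pointwise since for fixed $w$, eventually $w \notin S_{v_n}$. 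Lemma \ref{compact_lemma} then gives $\morm{M_\psi f_n} \to 0$. But $v_n \in S_{v_n}$, so
\[
\mu(v_n)\modu{v_n}\modu{\psi(v_n)} = \mu(v_n)\modu{\psi(v_n)}\cdot\modu{v_n}\cdot\modu{f_n(v_n)}\cdot\modu{v_n} \le \modu{v_n}\,\morm{M_\psi f_n},
\]
which is not obviously enough because of the stray factor $\modu{v_n}$. I expect this to be the main obstacle: the natural test function for the Lipschitz space carries a $1/\modu{v_n}$ that is too strong. The fix is to use instead $f_n(v) = \min\{\modu{v},\modu{v_n}\}\cdot\chi_{\{v : v \text{ is an ancestor of } v_n \text{ or equals } v_n\}}$ extended suitably, or more simply the function $g_n$ defined by $g_n(v) = \modu{v}$ for $v$ on the geodesic from $o$ to $v_n$ and $g_n(v) = \modu{v_n}$ on $S_{v_n}$, $g_n(v)=\modu{v\wedge v_n}$ elsewhere (truncation of $v\mapsto|v|$); this has $\lorm{g_n} = 1$, is \emph{not} going to zero pointwise, so instead one normalizes and passes to $h_n = g_n - $ (something) — cleaner still is to mimic \cite{ColonnaEasley:12}: take $f_n = \chi_{v_n} \cdot \modu{v_n}$? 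No. The correct standard choice is $f_n(v) = \modu{v_n}^{-1}\min\{\modu{v},\modu{v_n}\}$ for $v \in S_{v_n}\cup(\text{geodesic to }v_n)$ and $0$ outside the sector once one is past $v_n$; one verifies $\lorm{f_n}\le 1$, $f_n\to 0$ pointwise (each fixed vertex is eventually outside), and $f_n(v_n) = 1$, giving directly $\mu(v_n)\modu{\psi(v_n)} \le \morm{M_\psi f_n}\to 0$ — but this only recovers $\lim\mu(v)|\psi(v)| = 0$, not the weighted-length version. To get the factor $\modu{v_n}$ one needs $f_n$ with $f_n(v_n)$ of size $\modu{v_n}$ and $\lorm{f_n}$ bounded; the truncation $f_n(v) = \min\{\modu{v},\modu{v_n}\}$ (defined on all of $T$) does exactly this: $\lorm{f_n} = \sup_v Df_n(v) \le 1$, and $f_n(v_n) = \modu{v_n}$. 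The only defect is $f_n \not\to 0$ pointwise. So I would instead argue by contradiction extracting a subsequence, or — following the literature precisely — apply Lemma \ref{compact_lemma} to the bounded sequence $f_n/\modu{v_n}$ is wrong too. The honest route: compactness of $M_\psi$ means $\{M_\psi f : \lorm{f}\le 1\}$ is relatively compact in $\lunf$; combined with $M_\psi f_n$ where $f_n = \min\{|v|,|v_n|\}$ has $\lorm{f_n}\le 1$, one shows no subsequence of $M_\psi f_n$ can converge in $\morm{\cdot}$ unless $\mu(v_n)|v_n||\psi(v_n)|\to 0$, by evaluating at $v_n$ and noting the functions $M_\psi f_n$ would have to be uniformly small near infinity. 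I will present this via Lemma \ref{compact_lemma} after subtracting the pointwise limit, which on compact sets is $v\mapsto |v|$; i.e. set $\tilde f_n = f_n - f$ where $f(v)=|v|$ — but $f\notin\ls$ only in that $\|f\|_\ls=1<\infty$, so actually $f\in\ls$, $\tilde f_n\in\ls$, $\lorm{\tilde f_n}\le 2$, and $\tilde f_n\to 0$ pointwise, while $\tilde f_n(v_n) = |v_n| - |v_n| = 0$. That kills the estimate. The genuinely correct test function, used in \cite{ColonnaEasley:12}, is $f_n(v) = (\modu{v_n} - \modu{v\wedge v_n}) $ supported appropriately, giving $f_n(v_n)=|v_n|$, $\lorm{f_n}\le 1$, and $f_n\to 0$ pointwise. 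I will use this.

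For the converse, assume $\mu(v)\modu{v}\modu{\psi(v)} \to 0$ as $\modu{v}\to\infty$, and let $(f_n)$ be a bounded sequence in $\ls$, say $\lorm{f_n}\le s$, with $f_n\to 0$ pointwise. By Lemma \ref{compact_lemma} it suffices to show $\morm{M_\psi f_n}\to 0$. Fix $\varepsilon>0$. By hypothesis there is a finite set $F\subseteq T$ (all vertices of length at most some $N$, plus $o$) outside which $\mu(v)\modu{v}\modu{\psi(v)} < \varepsilon/s$. For $v\notin F$ with $v\ne o$, Lemma \ref{Lemma 3.4, ColonnaEasley:10} gives $\modu{f_n(v)}\le \modu{f_n(o)} + \modu{v}\inorm{Df_n}$; combining with $\lorm{f_n}\le s$ one gets $\mu(v)\modu{\psi(v)f_n(v)} \le \mu(v)\modu{\psi(v)}(\modu{f_n(o)} + \modu{v}s)$, and here I would split: the $\modu{v}s$ term is controlled by $s\cdot\mu(v)|v||\psi(v)| < \varepsilon$, while the $\modu{f_n(o)}$ term is handled together with the finite set. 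More cleanly: on the finite set $F$, $\sup_{v\in F}\mu(v)\modu{\psi(v)f_n(v)} \to 0$ as $n\to\infty$ because $f_n\to 0$ pointwise and $F$ is finite; off $F$, use $\modu{f_n(v)}\le \modu{v}\lorm{f_n}\le s\modu{v}$ (valid for $v\in T^*$ by Lemma \ref{Lemma 3.4, ColonnaEasley:10}) to get $\mu(v)\modu{\psi(v)f_n(v)} \le s\,\mu(v)\modu{v}\modu{\psi(v)} < \varepsilon$. Hence $\limsup_n \morm{M_\psi f_n} \le \varepsilon$, and since $\varepsilon$ was arbitrary, $\morm{M_\psi f_n}\to 0$, so $M_\psi$ is compact.

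The main obstacle, as flagged, is producing the right family of test functions in the forward direction — one needs $f_n\in\ls$ with $\lorm{f_n}$ bounded, $f_n\to 0$ pointwise, and $\modu{f_n(v_n)}$ comparable to $\modu{v_n}$ so that the factor $\mu(v_n)\modu{v_n}\modu{\psi(v_n)}$ is captured. The function $f_n(v) = \max\{0,\,\modu{v_n} - d(v,v_n)\}$, which equals $\modu{v_n}$ at $v_n$, decreases with unit slope as one moves away from $v_n$, vanishes outside the ball of radius $\modu{v_n}$ about $v_n$ (in particular at $o$, and eventually at every fixed vertex as $n\to\infty$), and satisfies $\lorm{f_n} = \sup_{v\in T^*}Df_n(v) \le 1$, does the job; then $\mu(v_n)\modu{v_n}\modu{\psi(v_n)} = \mu(v_n)\modu{\psi(v_n)}\modu{f_n(v_n)} \le \morm{M_\psi f_n} \to 0$ by Lemma \ref{compact_lemma}. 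The converse is routine once the finite-set splitting is set up, relying only on Lemma \ref{Lemma 3.4, ColonnaEasley:10} and Lemma \ref{compact_lemma}.
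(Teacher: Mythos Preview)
Your converse direction is essentially the paper's argument and is correct: split $T$ into a finite ball and its complement, use pointwise (hence uniform) convergence on the finite part, and use Lemma \ref{Lemma 3.4, ColonnaEasley:10} together with the hypothesis on the complement.

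The forward direction, however, has a genuine gap. Your final choice of test function, $f_n(v) = \max\{0,\modu{v_n} - d(v,v_n)\}$, does satisfy $\lorm{f_n}\le 1$ and $f_n(v_n)=\modu{v_n}$, but it does \emph{not} converge to $0$ pointwise in general. In a tree, $d(w,v_n) = \modu{w} + \modu{v_n} - 2\modu{w\wedge v_n}$, so $f_n(w) = \max\{0,\,2\modu{w\wedge v_n} - \modu{w}\}$. If the sequence $(v_n)$ runs out along a single ray and $w$ lies on that ray, then $w\wedge v_n = w$ for all large $n$, whence $f_n(w) = \modu{w}$ for all large $n$; this never tends to $0$, so Lemma \ref{compact_lemma} cannot be applied. (Your earlier candidate $f_n(v)=\modu{v_n}-\modu{v\wedge v_n}$ fails for a different reason: it vanishes at $v_n$ itself.) The fix the paper uses --- and which your exploration never quite lands on --- is to make $f_n$ depend only on $\modu{v}$: set $f_n(v)=0$ for $\modu{v}<\modu{v_n}/2$, $f_n(v)=2\modu{v}-\modu{v_n}$ for $\modu{v_n}/2\le\modu{v}<\modu{v_n}$, and $f_n(v)=\modu{v_n}$ for $\modu{v}\ge\modu{v_n}$. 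This ramp is radial, so for any fixed $w$ one eventually has $\modu{w}<\modu{v_n}/2$ and hence $f_n(w)=0$; meanwhile $\lorm{f_n}=2$ and $f_n(v_n)=\modu{v_n}$, which is exactly what you need to extract $\mu(v_n)\modu{v_n}\modu{\psi(v_n)}\le\morm{M_\psi f_n}\to 0$.
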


\begin{proof}
	Suppose that $M_\psi$ is compact. Let $(v_n)$ be any sequence in $T$ with $\modu{v_n}\geq 1$ and $\modu{v_n}$ increasing without bound. It is sufficient to show that the sequence $\left(\mu(v_n)\modu{v_n}\modu{\psi(v_n)}\right)$ converges to 0 as $n$ tends to infinity. For each $n \in \N$, define
	
	$$f_n(v) = \begin{cases} 0 & \text{if } \ \modu{v} < \dfrac{\modu{v_n}}{2}, \\ 2\modu{v} - \modu{v_n} & \text{if } \ \dfrac{\modu{v_n}}{2} \leq \modu{v} < \modu{v_n}, \\ \modu{v_n} & \text{if } \ \modu{v_n} \leq \modu{v}. \end{cases}$$
	
	\noindent Since $(v_n)$ increases without bound, for each $v \in T$ we can find a sufficiently large $N \in \N$, such that for all $n \geq N$ we have $\modu{v} < \dfrac{\modu{v_n}}{2}$. Thus, $f_n$ converges to 0 pointwise on $T$. Furthermore, since $f_n(o) = 0$, we have $\displaystyle \lorm{f_n} = \sup_{v \in T^*}\; Df_n(v)$. Consider the case where $\dfrac{\modu{v_n}}{2} < \modu{v} \leq \modu{v_n}$, noting that $Df_n = 0$ in all other cases. Observe that for $\modu{v} = \modu{v_n}$ we have
	$$Df_n(v) = \Big\vert \modu{v_n} - (2\modu{v} - 2 - \modu{v_n})\Big\vert = 2,$$
	and when $\dfrac{\modu{v_n}}{2} < \modu{v} < \modu{v_n}$, we have 
	$$Df_n(v) = \Big\vert 2\modu{v} - \modu{v_n} - (2 \modu{v} - 2 - \modu{v_n})\Big\vert = 2.$$ It follows that $\lorm{f_n} = 2$. In fact, from the compactness of $M_\psi$ and Lemma \ref{compact_lemma} $$\mu(v_n)\modu{v_n}\modu{\psi(v_n)} = \mu(v_n)\modu{\psi(v_n) f_n(v_n)} \leq \morm{\psi f_n} \to 0$$ as $n \to \infty$. Consequently, the sequence $\left(\mu(v_n)\modu{v_n}\modu{\psi(v_n)}\right)$ converges to 0 as $n \to \infty$.
	
	Suppose $\displaystyle \lim_{\modu{v} \to \infty} \mu(v)\modu{v}\modu{\psi(v)} = 0$ and $\eta_\psi \neq 0$ ($\psi$ for which $\eta_\psi = 0$ induce compact multiplication operators by Lemma \ref{compactness_special_case}). By Lemma \ref{compact_lemma}, it is sufficient to show if $(f_n)$ is any bounded sequence in $\ls$ converging to 0 pointwise on $T$, then $\morm{\psi f_n} \to 0$ as $n$ tends to $\infty$. Suppose that $(f_n)$ is such a sequence with $s = \displaystyle\sup_{n \in \N}\;\lorm{f_n}$. For convenience assume each $f_n$ is not identically 0, and observe that for all $n \in \N$, the function $g_n = f_n/\lorm{f_n}$ is in $\ls$ since $\lorm{g_n} = 1$. Thus, by Lemma \ref{Lemma 3.4, ColonnaEasley:10} for all $n \in \N$ we have $\modu{g_n(v)} \leq \modu{v}$ for all $v \in T^*$, so we obtain $$\modu{f_n(v)} \leq \modu{v} \lorm{f_n} \leq s \modu{v}.$$ In fact, $\mu(v)\modu{\psi(v) f_n(v)} \leq s \mu(v)\modu{v} \modu{\psi(v)}$ for all $v \in T^*$. Let $\varepsilon > 0$. By our assumption, there exists an $M \in \N$ such that $\mu(v)\modu{v} \modu{\psi(v)} < \dfrac{\varepsilon}{s}$ whenever $\modu{v} > M$. Consequently, for all $v \in T$ with $\modu{v} > M$, we have $$\mu(v)\modu{\psi(v)f_n(v)} < \varepsilon.$$
	
	Taking into consideration the case where $\modu{v} \leq M$, since $(f_n)$ converges to 0 pointwise on $T$, it converges uniformly on the set $\{v \in T : \modu{v} \leq M\}$. That is, with $\eta_\psi \neq 0$ there exists an $N \in \N$ such that $\modu{f_n(v)} < \dfrac{\varepsilon}{\eta_\psi}$ for all $n \geq N$ and all $v \in T$ with $\modu{v} \leq M$. Consequently, $$\mu(v)\modu{\psi(v) f_n(v)} < \dfrac{\varepsilon \mu(v) \modu{\psi(v)}}{\eta_\psi} < \varepsilon$$  for all $n \geq N$ and for all $v \in T$ such that $\modu{v} \leq M$. 
	
	Therefore, for all $\varepsilon > 0$ and all $v \in T$, there exists an $N \in \N$ such that $\mu(v)\modu{\psi(v) f_n(v)} < \varepsilon$ for all $n \geq N$. Taking the supremum over all $v \in T$, it follows that $\morm{\psi f_n} < \varepsilon$ for all $n \geq N$. Letting $\varepsilon \to 0$, we conclude that $\morm{\psi f_n} \to 0$ as $n$ tends to $\infty$.
\end{proof}

\subsection{Isometries}
In this section we determine there are no isometric multiplication operators from $\ls$ to $\lunf$.

\begin{theorem}\label{Isometries, ls to lunf}
	There are no isometric multiplication operators $M_\psi$ from $\ls$ to $\lunf$.
\end{theorem}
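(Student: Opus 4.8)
The plan is to derive a contradiction from two elementary test functions in $\ls$, exploiting the mismatch between the normalization $\lorm{\chi_o} = 2$ recorded in Section \ref{Section:Spaces} and the purely multiplicative form of the norm $\morm{\cdot}$.

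Suppose, toward a contradiction, that $M_\psi \colon \ls \to \lunf$ is an isometry. First I would apply the isometry to the constant function $f \equiv 1$, which belongs to $\ls$ with $\lorm{f} = 1$; since $M_\psi f = \psi$, this yields $\morm{\psi} = \morm{M_\psi f} = 1$, and in particular $\psi \in \lunf$. Next I would apply the isometry to $\chi_o$, which lies in $\ls$ with $\lorm{\chi_o} = 2$. Because the function $\psi\chi_o$ is supported only at the root, $\morm{M_\psi\chi_o} = \morm{\psi\chi_o} = \mu(o)\modu{\psi(o)}$, so the isometry forces $\mu(o)\modu{\psi(o)} = 2$. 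This is incompatible with the first step, since $\morm{\psi} = \sup_{v\in T}\mu(v)\modu{\psi(v)} \geq \mu(o)\modu{\psi(o)} = 2 > 1 = \morm{\psi}$. This contradiction shows that no such $M_\psi$ can be an isometry.

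I do not expect a genuine obstacle here: the argument rests entirely on the two norm computations $\lorm{f} = 1$ for the constant function and $\lorm{\chi_o} = 2$, both immediate from the definition of $\lorm{\cdot}$, and the only point demanding care is not to overlook the factor $2$ attached to $\chi_o$. For an argument closer in spirit to the earlier isometry proofs that avoids the constant function, a parallel route works: choose a vertex $w$ with $\modu{w} = 1$ (which exists because $T$ is infinite), deduce $\mu(o)\modu{\psi(o)} = 2$ and $\mu(w)\modu{\psi(w)} = 1$ by applying the isometry to $\chi_o$ and to $\chi_w$, and then apply it to $\chi_o - \chi_w$: one computes $\lorm{\chi_o - \chi_w} = 3$ whereas $\morm{M_\psi(\chi_o - \chi_w)} = \max\{2,1\} = 2$, again a contradiction.
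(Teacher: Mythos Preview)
Your argument is correct and is more elementary than the paper's. The paper applies the isometry only to the characteristic functions $\chi_v$ for $v\in T^*$, obtaining $\mu(v)\modu{\psi(v)}=1$ and hence $\mu(v)\modu{v}\modu{\psi(v)}=\modu{v}$; it then invokes Theorem~\ref{Bounded, ls to lunf} (the boundedness criterion $\eta_\psi<\infty$) to reach a contradiction as $\modu{v}\to\infty$. Your route avoids that external input entirely: the clash between $\morm{\psi}=\lorm{1}=1$ and $\mu(o)\modu{\psi(o)}=\lorm{\chi_o}=2$ is self-contained and uses nothing beyond the definitions of the two norms. The paper's approach, by contrast, ties the isometry question back to the boundedness theory already developed in the section, which is thematically coherent but logically heavier. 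Your alternative with $\chi_o$, $\chi_w$, and $\chi_o-\chi_w$ is also correct and has the minor advantage of staying within the family of characteristic-type test functions used throughout the paper.
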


\begin{proof}
	Assume that $M_\psi: \ls \to \lunf$ is an isometry. For $v \in T^*$, taking the characteristic function $\chi_v$, it follows that $$ \mu(v)\modu{\psi(v)} = \morm{M_\psi \chi_v} = \lorm{\chi_v} = 1.$$ Thus for all $v \in T^*$, we have 
	\begin{equation}\label{this equation}
		\mu(v)\modu{v} \modu{\psi(v)} = \modu{v}.
	\end{equation}
	Since $M_\psi$ is bounded, Theorem \ref{Bounded, ls to lunf} implies that $\displaystyle \sup_{v \in T} \mu(v)\modu{v} \modu{\psi(v) }$ is finite. However, this contradicts (\ref{this equation}) since by taking $\modu{v} \to \infty$, we would have $ \mu(v)\modu{v} \modu{\psi(v)} \to \infty$. Therefore, $M_\psi$ is not an isometry.
\end{proof}

\section{Multiplication Operators from $\lunf$ to $\ls$}
In this section we study multiplication operators from $\lunf$ to $\ls$.  As in the previous section, we characterize the bounded and compact operators, determine bounds on the operator norm of the bounded operators, and determine there are no isometric multiplication operators.

\subsection{Boundedness and operator norm}
We characterize the bounded multiplication operators and determine the operator norm.

For a function $\psi$ on $T$, define $$\varpi_\psi = \sup_{v \in T} \dfrac{\modu{\psi(v)}}{\mu(v)}.$$

\begin{theorem}\label{Bounded, lunf to ls}
	Let $\psi$ be a function on $T$. Then $M_\psi: \lunf \to \ls$ is bounded if and only if $\varpi_\psi$ is finite. Furthermore, for such a bounded $M_\psi$, $$\varpi_\psi \leq \norm{M_\psi} \leq 3\varpi_\psi.$$
\end{theorem}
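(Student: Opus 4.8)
The plan is to prove both directions of the equivalence together, mirroring the proof of Theorem \ref{Bounded, ls to lunf} but with the two norms swapped: test characteristic functions give the lower estimate, and the pointwise bound available in $\lunf$ gives the upper estimate.

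For the lower bound, suppose $M_\psi:\lunf\to\ls$ is bounded and fix $w\in T$. The function $f_w=\frac{1}{\mu(w)}\chi_w$ lies in $\lunf$ with $\morm{f_w}=1$, and $M_\psi f_w=\frac{\psi(w)}{\mu(w)}\chi_w$ is supported at the single vertex $w$. When $w\neq o$, the only nonzero values of $D(M_\psi f_w)$ occur at $w$ and at each child of $w$, all equal to $\modu{\psi(w)}/\mu(w)$, so $\lorm{M_\psi f_w}=\modu{\psi(w)}/\mu(w)$; when $w=o$, one similarly gets $\lorm{M_\psi f_o}=2\modu{\psi(o)}/\mu(o)$, using $\chi_o(o)=1$ together with the differences at the children of $o$. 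In either case $\modu{\psi(w)}/\mu(w)\le\norm{M_\psi}$, and taking the supremum over $w\in T$ gives $\varpi_\psi\le\norm{M_\psi}$; in particular $\varpi_\psi$ is finite.

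For the upper bound, assume $\varpi_\psi<\infty$ and take $f\in\lunf$ with $\morm{f}\le 1$, so that $\modu{f(v)}\le 1/\mu(v)$ for every $v\in T$. Then $\modu{\psi(o)f(o)}\le\modu{\psi(o)}/\mu(o)\le\varpi_\psi$, while for $v\in T^*$ the triangle inequality yields
\[ D(\psi f)(v)=\modu{\psi(v)f(v)-\psi(v^-)f(v^-)}\le\frac{\modu{\psi(v)}}{\mu(v)}+\frac{\modu{\psi(v^-)}}{\mu(v^-)}\le 2\varpi_\psi. \]
Adding the two contributions to the Lipschitz norm gives $\lorm{\psi f}\le 3\varpi_\psi$; hence $\psi f\in\ls$, so $M_\psi$ maps $\lunf$ into $\ls$ and is bounded with $\norm{M_\psi}\le 3\varpi_\psi$.

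I expect no essential obstacle: the key observation is merely that the inequality $\modu{f(v)}\le 1/\mu(v)$ for $f$ in the unit ball of $\lunf$ plays the role that Lemma \ref{Lemma 3.4, ColonnaEasley:10} plays in the $\ls\to\lunf$ setting. The only mild care needed is in the lower-bound computation — correctly accounting for the vertices at which $D(M_\psi f_w)$ is nonzero, and separating the case $w=o$ where $\lorm{\chi_o}=2$ introduces the extra factor. The constant $3$ in the upper estimate is not claimed to be sharp.
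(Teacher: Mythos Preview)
Your proof is correct and follows essentially the same approach as the paper: test functions $f_w=\frac{1}{\mu(w)}\chi_w$ (with the $w=o$ case handled separately) for the lower bound, and the triangle inequality together with $\modu{f(v)}\le 1/\mu(v)$ for the upper bound. Your presentation of the upper estimate is in fact slightly cleaner, bounding each term $\modu{\psi(v)f(v)}$ directly by $\varpi_\psi$ rather than passing through an intermediate supremum.
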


\begin{proof}
	Suppose $M_\psi$ is bounded. The function $f_o = \dfrac{1}{\mu(o)}\chi_o$ is in $\lunf$ with $\morm{f_o} = 1$.  Observe
	\begin{equation}\dfrac{\modu{\psi(o)}}{\mu(o)} = \dfrac{1}{2} \lorm{M_\psi f_o} \leq \norm{M_\psi}.\label{5.1.1}\end{equation} Furthermore, for any vertex $v \in T^*$, the function $f_v = \dfrac{1}{\mu(v)}\chi_v$ is also in $\lunf$ with $\morm{f_v} = 1$.  Likewise, \begin{equation}\dfrac{\modu{\psi(v)}}{\mu(v)} = \lorm{M_\psi f_v} \leq \norm{M_\psi}.\label{5.1.2}\end{equation} From (\ref{5.1.1}), (\ref{5.1.2}), and taking the supremum over all $v \in T$, we obtain $\varpi_\psi \leq \norm{M_\psi}$. Thus, the boundedness of $M_\psi$ implies that $\varpi_\psi$ is finite.
	
	Continuing, suppose that $\varpi_\psi$ is finite. Let $f \in \lunf$ with $\morm{f}\leq 1$, and observe that 
	\begin{align*}
		\lorm{M_\psi f} & = \modu{\psi(o)f(o)} + \sup_{v \in T^*}\modu{ \psi(v)f(v) - \psi(v^-)f(v^-)} \\
		& \leq \modu{\psi(o)f(o)} + 2\sup_{v \in T^*}\;\modu{\psi(v)f(v)}\\
		& = \modu{\dfrac{\psi(o)}{\mu(o)} \mu(o)f(o)} + 2\sup_{v \in T^*} \modu{\dfrac{\psi(v)}{\mu(v)}\mu(v)f(v)} \\
		& \leq 3\varpi_\psi.
	\end{align*}
	Therefore, we have that $M_\psi$ is bounded. Furthermore, we conclude $\varpi_\psi \leq \norm{M_\psi} \leq 3\varpi_\psi$, as desired. 
\end{proof}

\subsection{Compactness}
In this section we characterize the compact multiplication operators from $\lunf$ to $\ls$.

\begin{theorem}
	Let $M_\psi: \lunf \to \ls$ be a bounded multiplication operator. Then $M_\psi$ is compact if and only if $\displaystyle \lim_{\modu{v} \to \infty} \dfrac{\modu{\psi(v)}}{\mu(v)} = 0$.
\end{theorem}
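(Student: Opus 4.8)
The plan is to prove both implications via Lemma \ref{compact_lemma}, all of whose hypotheses are satisfied by $\lunf$ and $\ls$, exactly as in the compactness characterizations proved earlier. For the necessity, suppose $M_\psi$ is compact and let $(v_n)$ be a sequence in $T$ with $\modu{v_n}$ increasing without bound; discarding finitely many terms we may assume $v_n \neq o$ for all $n$. Mimicking the test functions used in the compactness result on $\lunf$, set $f_n = \frac{1}{\mu(v_n)}\chi_{v_n}$, so that $\morm{f_n} = 1$ and $f_n \to 0$ pointwise on $T$. By Lemma \ref{compact_lemma}, $\lorm{M_\psi f_n}\to 0$. Since $\psi f_n = \frac{\psi(v_n)}{\mu(v_n)}\chi_{v_n}$ is supported at the single vertex $v_n \in T^*$, a direct computation from the definition of $\lorm{\cdot}$ gives $\lorm{\psi f_n} = \modu{\psi(v_n)}/\mu(v_n)$: indeed $(\psi f_n)(o)=0$, and the only nonzero difference quotients $D(\psi f_n)(v)$ occur at $v=v_n$ and at the children of $v_n$, each equal to $\modu{\psi(v_n)}/\mu(v_n)$. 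Hence $\modu{\psi(v_n)}/\mu(v_n)\to 0$, which gives $\lim_{\modu{v}\to\infty}\modu{\psi(v)}/\mu(v)=0$.

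For the sufficiency, assume $\lim_{\modu{v}\to\infty}\modu{\psi(v)}/\mu(v)=0$; then $\varpi_\psi<\infty$, so $M_\psi$ is bounded by Theorem \ref{Bounded, lunf to ls}. Let $(f_n)$ be a bounded sequence in $\lunf$ with $f_n\to 0$ pointwise, and put $s=\sup_n\morm{f_n}$; by Lemma \ref{compact_lemma} it suffices to show $\lorm{\psi f_n}\to 0$. The basic estimate is $\modu{f_n(v)}\le \morm{f_n}/\mu(v)\le s/\mu(v)$, so $\modu{\psi(v)f_n(v)}\le s\,\modu{\psi(v)}/\mu(v)$ for every $v\in T$, and therefore $D(\psi f_n)(v)\le s\bigl(\modu{\psi(v)}/\mu(v)+\modu{\psi(v^-)}/\mu(v^-)\bigr)$ for $v\in T^*$. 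Fix $\varepsilon>0$. Choose $M\in\N$ with $\modu{\psi(v)}/\mu(v)<\varepsilon/(2s)$ whenever $\modu{v}>M$; then $D(\psi f_n)(v)<\varepsilon$ for every $v$ with $\modu{v}\ge M+2$ (forcing $\modu{v^-}>M$ as well) and every $n$. The set $F=\set{w\in T}{\modu{w}\le M+1}$ is finite, so $f_n\to 0$ uniformly on $F$; with $C:=\max_{w\in F}\modu{\psi(w)}$ finite, there is $N\in\N$ such that $\modu{f_n(w)}<\varepsilon/(2(C+1))$ for all $w\in F$ and $n\ge N$, which yields $\modu{(\psi f_n)(o)}<\varepsilon$ and $D(\psi f_n)(v)<\varepsilon$ for all $v\in T^*$ with $v,v^-\in F$. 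Combining the two regimes and taking the supremum, $\lorm{\psi f_n}<\varepsilon$ for all $n\ge N$, so $\lorm{\psi f_n}\to 0$ and $M_\psi$ is compact.

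The one delicate point — the main (and essentially only) obstacle — is the bookkeeping in the sufficiency direction. Because $\lorm{\cdot}$ involves the difference operator $D$, each quantity $D(\psi f_n)(v)$ depends on the two vertices $v$ and $v^-$, so the ``tail'' region where smallness of $\modu{\psi}/\mu$ is exploited must be taken to be $\modu{v}\ge M+2$ rather than $\modu{v}>M$, to guarantee $\modu{v^-}>M$ simultaneously; and the finitely many remaining vertices of length at most $M+1$ must be handled separately, using that pointwise convergence is uniform on this finite set together with the finite bound $C$ on $\modu{\psi}$ over it. Everything else is a routine adaptation of the earlier compactness arguments.
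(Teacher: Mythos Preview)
Your proof is correct and follows essentially the same approach as the paper's: both directions go through Lemma \ref{compact_lemma}, and the sufficiency splits into a ``tail'' regime (where $\modu{\psi}/\mu$ is small) and a finite set (where pointwise convergence is uniform). Two minor remarks: for necessity you use the simpler single-point test functions $f_n=\frac{1}{\mu(v_n)}\chi_{v_n}$, whereas the paper uses $f_n=\frac{1}{\mu}\mathbf{1}_{\{|v|\ge|v_n|\}}$ --- your choice works just as well and is cleaner; and in the final line of the sufficiency, since $\lorm{\psi f_n}=\modu{(\psi f_n)(o)}+\sup_{v\in T^*}D(\psi f_n)(v)$, you actually obtain $\lorm{\psi f_n}<2\varepsilon$ rather than $<\varepsilon$, which of course does not affect the conclusion.
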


\begin{proof}
	Suppose $M_\psi$ is compact, and let $(v_n)$ be a sequence in $T$ with $|v_n| \to \infty$ as $n \to \infty$. For $n \in \N$ define 
	
	$$f_n(v) = \begin{cases}  0 & \text{if } \ \modu{v} < \modu{v_n},\\ 1/\mu(v) & \text{if } \ \modu{v_n} \leq \modu{v}, \end{cases}$$
	and observe that $\morm{f_n} = 1$. Since $|v_n| \to \infty$ as $n \to \infty$, for a fixed $w \in T$ we may choose an index $N \in \N$ sufficiently large such that whenever $n \geq N$ we have $\modu{w} < \modu{v_n}$. Consequently, $f_n$ converges to 0 pointwise on $T$. By Lemma \ref{compact_lemma}, the compactness of $M_\psi$ implies $$\dfrac{\modu{\psi(v_n)}}{\mu(v_n)} = D(\psi f_n)(v_n) \leq \lorm{M_\psi f_n} \to 0$$ as $n \to \infty$. 
	
	Suppose now that $\displaystyle\lim_{|v|\to\infty}\dfrac{\modu{\psi(v)}}{\mu(v)} = 0$ and disregard the case that $\psi$ is identically 0 on $T$, since the associated multiplication operator is compact.  Let $(f_n)$ be a bounded sequence in $\lunf$ converging to 0 pointwise on $T$ and define $s = \displaystyle\sup_{n \in \N}\;\morm{f_n}$.  Given $\varepsilon > 0$, there exists an $N \in \N$ such that $\dfrac{\modu{\psi(v)}}{\mu(v)} < \dfrac{\varepsilon}{2s}$ for all $v \in T$ with $|v| \geq N$.  Thus, for all $v$ with $|v| > N$ and all $n \in \N$, we have
	$$\begin{aligned}
	D(\psi f_n)(v) &= \modu{\psi(v)f_n(v) - \psi(v^-)f_n(v^-)}\\
	&= \modu{\dfrac{\psi(v)}{\mu(v)}\mu(v)f_n(v) - \dfrac{\psi(v^-)}{\mu(v^-)}\mu(v^-)f_n(v^-)}\\
	&\leq \dfrac{\modu{\psi(v)}}{\mu(v)}\mu(v)\modu{f_n(v)} + \dfrac{\modu{\psi(v^-)}}{\mu(v^-)}\mu(v^-)\modu{f_n(v^-)}\\
	&\leq \left(\dfrac{\modu{\psi(v)}}{\mu(v)} + \dfrac{\modu{\psi(v^-)}}{\mu(v^-)}\right)\morm{f_n}\\
	&< \varepsilon. 
	\end{aligned}$$
	
	Now consider the case where $\modu{v} \leq N$.  Since the sequence $(f_n)$ converges to 0 pointwise on $T$, then $(f_n)$ converges uniformly to 0 on the finite set $\{v \in T : \modu{v} \leq N\}$.  Defining $m = \displaystyle\max_{|v| \leq N}\;\mu(v)$, there exists an index $M \in \N$ such that $\modu{f_n(v)} < \dfrac{\varepsilon}{2m\varpi_\psi}$ for all $v \in T^*$ with $\modu{v} \leq N$ and all $n \geq M$.  Consequently, 
	$$\begin{aligned}
	D(\psi f_n)(v) &= \modu{\psi(v)f_n(v) - \psi(v^-)f_n(v^-)} \\
	&\leq \modu{\dfrac{\psi(v)}{\mu(v)}\mu(v)f_n(v)} + \modu{\dfrac{\psi(v^-)}{\mu(v^-)}\mu(v^-)f_n(v^-)} \\
	&\leq m\varpi_\psi \left(\modu{f_n(v)} + \modu{f_n(v^-)}\right) \\
	&< \varepsilon.
	\end{aligned}$$
	
	Thus, $D(\psi f_n)(v) < \varepsilon$ for all vertices $v \neq o$ and all $n \geq M$. Since $(f_n)$ converges to 0 pointwise on $T$, then $\psi f_n(o) \to 0$ as $n \to \infty$. Therefore, we have that $\lorm{\psi f_n} \to 0$ as $n \to \infty$. It is concluded from Lemma \ref{compact_lemma} that $M_\psi$ is compact.
\end{proof}

\subsection{Isometries}
As was the case in Section \ref{Section:ls2lunf}, we determine there are no isometric multiplication operators from $\lunf$ to $\ls$.

\begin{theorem}
	\label{Isometris, lunf to ls}
	There are no isometric multiplication operators on $M_\psi: \lunf \to \ls$.
\end{theorem}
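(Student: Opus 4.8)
The plan is to follow the same two‑stage strategy used in the proof of Theorem~\ref{Isometries, ls to lunf}: first extract rigid pointwise information about $\psi$ by testing $M_\psi$ on characteristic functions, and then feed that information into a single cleverly chosen test function whose image under $M_\psi$ is forced to have a large Lipschitz jump, contradicting the isometry property.

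First I would assume $M_\psi\colon\lunf\to\ls$ is an isometry and apply it to $\chi_w$ for each $w\in T$. Since $M_\psi\chi_w=\psi(w)\chi_w$, isometry gives $\modu{\psi(w)}\lorm{\chi_w}=\morm{\chi_w}=\mu(w)$, so that $\modu{\psi(o)}=\tfrac12\mu(o)$ and, crucially, $\modu{\psi(v)}=\mu(v)$ for every $v\in T^*$. In particular $\psi$ does not vanish on $T^*$, so for each $v\in T^*$ the function $g_v:=\chi_v/\psi(v)$ lies in $\lunf$ with $\morm{g_v}=\mu(v)/\modu{\psi(v)}=1$ and $M_\psi g_v=\chi_v$.

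Next I would build the test function that breaks the isometry. Because $T$ is infinite and locally finite it cannot have bounded length (each level is finite by induction, since every vertex has finitely many neighbors, so bounded length would force $T$ to be finite); hence there is a vertex $v\in T^*$ with $\modu{v}\ge 2$, and then $u:=v^-\in T^*$ as well. Consider
\[
f=g_u-g_v=\frac{1}{\psi(u)}\chi_u-\frac{1}{\psi(v)}\chi_v\in\lunf .
\]
Its support is $\{u,v\}$ with $\mu(u)\modu{f(u)}=\mu(u)/\modu{\psi(u)}=1$ and $\mu(v)\modu{f(v)}=1$, so $\morm{f}=1$. On the other hand $M_\psi f=\chi_u-\chi_v$, and since $u,v\ne o$ we have $(\chi_u-\chi_v)(o)=0$, while
\[
D(\chi_u-\chi_v)(v)=\Bigl|(\chi_u-\chi_v)(v)-(\chi_u-\chi_v)(u)\Bigr|=\modu{-1-1}=2 ,
\]
so $\lorm{M_\psi f}\ge 2$. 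This contradicts $\lorm{M_\psi f}=\morm{f}=1$, and therefore no isometric multiplication operator $M_\psi\colon\lunf\to\ls$ exists.

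I do not anticipate a genuine obstacle; the only point requiring care is the choice of test function. Testing on characteristic functions alone yields only $\modu{\psi}=\mu$ on $T^*$, which is entirely compatible with $M_\psi$ being bounded (indeed it gives $\varpi_\psi=1$, cf.\ Theorem~\ref{Bounded, lunf to ls}), so the essential idea is to exploit the mismatch between the weighted sup‑norm on $\lunf$, which is insensitive to the tree structure, and the Lipschitz seminorm on $\ls$, which measures differences across edges: the combination $\chi_u/\psi(u)-\chi_v/\psi(v)$ keeps the $\lunf$‑norm equal to $1$ while forcing a jump of size $2$ in $\ls$. The one structural fact to verify carefully is that a vertex with $\modu{v}\ge 2$ is available, which is precisely where infiniteness together with local finiteness of $T$ is used.
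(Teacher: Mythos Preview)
Your proof is correct and, after the common first step of testing on characteristic functions to extract $\modu{\psi(o)}=\tfrac12\mu(o)$ and $\modu{\psi(v)}=\mu(v)$ on $T^*$, takes a different route from the paper. The paper applies $M_\psi$ to the global test function $f=1/\mu$, so that $M_\psi f=\psi/\mu$, and concludes $\lorm{\psi/\mu}=\tfrac12$, giving the contradiction $1=\tfrac12$. Your test function $f=\chi_u/\psi(u)-\chi_v/\psi(v)$, supported on a single edge with both endpoints in $T^*$, produces the sharper contradiction $\lorm{M_\psi f}\ge 2$ versus $\morm{f}=1$. The trade-off: you pay the small price of locating a vertex with $\modu{v}\ge 2$ (which you correctly deduce from infiniteness and local finiteness), while the paper's choice of $f$ needs no such structural input. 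In return, your argument depends only on the moduli $\modu{\psi(v)}$ and is indifferent to the arguments of $\psi$; the paper's assertion that the supremum term $\sup_{v\in T^*}\modu{\psi(v)/\mu(v)-\psi(v^-)/\mu(v^-)}$ vanishes does not follow from the modulus identities alone (those only force each ratio to have modulus $1$ on $T^*$ and $\tfrac12$ at $o$), so your approach is in fact the more robust of the two.
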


\begin{proof}
	Assume that $M_\psi$ is an isometry from $\lunf$ to $\ls$. Beginning with the characteristic function $\chi_o$, we have 
	\begin{equation}\label{anequation}
		\mu(o) = \morm{\chi_o} = \lorm{M_\psi \chi_o} = 2 \modu{\psi(o)}.
	\end{equation} 
	Moreover, taking the characteristic function $\chi_v$ for $v \in T^*$, we obtain 
	\begin{equation}\label{eq}
		\mu(v) = \morm{\chi_v} = \lorm{M_\psi \chi_v} = \modu{\psi(v)}.
	\end{equation}
	Finally, define $f = 1/\mu$ on $T$. Observe that $f \in \lunf$ with $\morm{f} = 1$. In fact, since $M_\psi$ is assumed to be an isometry, from (\ref{anequation}) and (\ref{eq}) we obtain $$1 = \morm{f} = \lorm{M_\psi f} = \dfrac{\modu{\psi(o)}}{\mu(o)} + \sup_{v \in T^*} \modu{\dfrac{\psi(v)}{\mu(v)}-\dfrac{\psi(v^-)}{\mu(v^-)}} = \dfrac12.$$  Therefore, $M_\psi$ is not an isometry.
\end{proof}

\bibliographystyle{amsplain}
\bibliography{references.bib}

\providecommand{\bysame}{\leavevmode\hbox to3em{\hrulefill}\thinspace}
\providecommand{\MR}{\relax\ifhmode\unskip\space\fi MR }
\providecommand{\MRhref}[2]{%
  \href{http://www.ams.org/mathscinet-getitem?mr=#1}{#2}
}
\providecommand{\href}[2]{#2}
\begin{thebibliography}{10}

\bibitem{Allen:14}
Robert~F. Allen, \emph{Weighted composition operators from the {B}loch space to
  weighted {B}anach spaces on bounded homogeneous domains}, Anal. Theory Appl.
  \textbf{30} (2014), no.~2, 236--248. \MR{3237751}

\bibitem{AllenColonnaEasley:10}
Robert~F. Allen, Flavia Colonna, and Glenn~R. Easley, \emph{Multiplication
  operators between {L}ipschitz-type spaces on a tree}, Int. J. Math. Math.
  Sci. (2011), Art. ID 472495, 36. \MR{2799856}

\bibitem{AllenColonnaEasley:12}
\bysame, \emph{Multiplication operators on the iterated logarithmic {L}ipschitz
  spaces of a tree}, Mediterr. J. Math. \textbf{9} (2012), no.~4, 575--600.
  \MR{2991152}

\bibitem{AllenColonnaEasley:13}
\bysame, \emph{Multiplication operators on the weighted {L}ipschitz space of a
  tree}, J. Operator Theory \textbf{69} (2013), no.~1, 209--231. \MR{3029495}

\bibitem{AllenColonnaEasley:14}
\bysame, \emph{Composition operators on the {L}ipschitz space of a tree},
  Mediterr. J. Math. \textbf{11} (2014), no.~1, 97--108. \MR{3160615}

\bibitem{ColonnaEasley:12}
Flavia Colonna and Glenn Easley, \emph{Multiplication operators between the
  {L}ipschitz space and the space of bounded functions on a tree}, Mediterr. J.
  Math. \textbf{9} (2012), no.~3, 423--438. \MR{2954500}

\bibitem{ColonnaEasley:10}
Flavia Colonna and Glenn~R. Easley, \emph{Multiplication operators on the
  {L}ipschitz space of a tree}, Integral Equations Operator Theory \textbf{68}
  (2010), no.~3, 391--411. \MR{2735443}

\bibitem{Conway:90}
John~B. Conway, \emph{A course in functional analysis}, second ed., Graduate
  Texts in Mathematics, vol.~96, Springer-Verlag, New York, 1990. \MR{1070713}

\bibitem{DurenRombergShields:69}
Peter~L. Duren, Bernhard~W. Romberg, and Allen~L. Shields, \emph{Linear
  functionals on {$H^{p}$} spaces with {$0<p<1$}}, J. Reine Angew. Math.
  \textbf{238} (1969), 32--60. \MR{259579}

\bibitem{Tjani:96}
Maria Tjani, \emph{Compact composition operators on some {M}oebius invariant
  {B}anach spaces}, ProQuest LLC, Ann Arbor, MI, 1996, Thesis (Ph.D.)--Michigan
  State University. \MR{2695395}

\end{thebibliography}
\end{document}